\newtheorem{theorem}{Theorem}[section]
\newtheorem{lemma}{Lemma}[section]
\theoremstyle{definition}
\newtheorem{remark}{Remark}
\newtheorem{example}{Example}
\newtheorem*{localization}{Localization}
\numberwithin{equation}{section}
\newcommand{\R}{\mathbf{R}}
\def\S{\mathbf{S}}
\newcommand{\N}{\mathbf{N}}
\newcommand{\Q}{\mathbf{Q}}
\newcommand{\Z}{\mathbf{Z}}
\newcommand{\eps}{\varepsilon}
\newcommand{\oo}{\infty}
\newcommand{\nb}{\nabla}
\newcommand{\ov}{\overline}
\newcommand{\dw}{\downarrow}
\newcommand{\rw}{\rightarrow}
\newcommand{\Om}{\Omega}
\newcommand{\restr}{ \!\!\mbox{{ \Large$\llcorner$}} }
\newcommand{\be}{\begin{equation}}
\newcommand{\ee}{\end{equation}}
\DeclareMathOperator{\supp}{supp}
\DeclareMathOperator{\argmin}{argmin}
\newcommand\D{\mathcal{D}}
\newcommand\lt{\left}
\newcommand\rt{\right}
\newcommand\F{\mathcal{F}}
\newcommand\E{\mathcal{E}}
\def\s{\sigma}
\def\g{\gamma}
\def\t{\theta}
\def\a{\alpha}
\def\d{\delta}
\def\A{\mathcal{A}}
\def\M{\mathcal{M}}
\def\H{\mathcal{H}}
\def\C{\mathcal{C}}
\def\L{\mathcal{L}}
\def\LF{\mathcal{LF}}
\def\P{\mathcal{P}}
\def\e{_\eps}
\def\sm{\setminus}
\def\de{\partial}
\def\dif{\;\mathrm{d}}
\def\dx{\;\mathrm{d}x}
\def\dy{\;\mathrm{d}y}
\def\dt{\;\mathrm{d}t}
\def\dH{\;\mathrm{d}\H}
\def\txk{\tilde{x}_\eps}
\def\rwstar{\stackrel{*}{\rightharpoonup}}
\title{A simple phase-field approximation of the Steiner problem
in dimension two}
\author{A. Chambolle\and B. Merlet\and L. Ferrari}
\date{}
\begin{document}
\maketitle
\begin{abstract}
 In this paper we consider the branched transportation problem in 2D associated with a cost per unit length of the form $1+\alpha m$  where $m$ denotes the amount of transported mass and $\alpha>0$ is a fixed parameter (notice that the limit case $\alpha=0$ corresponds to the classical Steiner problem). Motivated by the numerical approximation of this  problem, we introduce a family of functionals ($\{\F_\eps\}_{\eps>0}$) which approximate the above branched transport energy. We  justify rigorously the approximation by establishing the equicoercivity and the  $\Gamma$-convergence of $\{\F_\eps\}$ as $\eps\downarrow0$. Our functionals are modeled on the Ambrosio-Tortorelli functional and are easy to optimize in practice. We present numerical evidences of the efficiency of the method. \end{abstract}
\section{Introduction}

In this paper, we introduce a phase-field approximation of a 
branched transportation energy for lines in the plane. Our main goal
is to derive a computationally tractable approximation of the
Steiner problem (of minimizing the length of lines connecting a
given set of points) in a phase-field setting. Similar results
have recently be obtained by~\cite{Bon_Lem_San}, however we believe
our approach is slightly simpler and numerically easier to implement.
We show that we can modify classical
approximations for free discontinuity problems~\cite{Mod_Mort,
Amb_Tort1,Iur,ContiFocardiIurlano} to address our specific
problem, where the limiting energy is concentrated only on a
singular one-dimensional network (and roughly measures its length). 
Numerical results illustrate the behaviour of these elliptic
approximations. In this first study, we limit ourselves
to the two-dimensional case, as in that case lines can locally
be seen as discontinuities of piecewise constant functions, so that
our construction is deriving in a quite simple ways from the above
mentioned previous works on free discontinuity problems. Higher
dimension is more challenging, from the topological point of view;
an extension of this approach is currently in preparation.

We now introduce precisely our mathematical framework.
Let $\Om\subset \R^2$ be a convex, bounded open set.
We consider measures $\s\in \M(\ov{\Om},\R^2)$ that write 
\[
\s=\t\xi\cdot\H^1\restr M,
\]
where $M$ is a $1$-dimensional rectifiable set orientated by a Borel measurable mapping $\xi:M\rw \S^1$ and  $\t:M\rw\R_+$ is a Borel measurable function representing the multiplicity. Such measure is called a rectifiable measure. We follow the notation of~\cite{OS2011} and write $\s=U(M,\t,\xi)$. Given a cost function $f\in C(\R_+,\R_+)$, we introduce the functional defined on $\M(\ov\Om,\R^2)\to\R_+\cup\{+\infty\}$ as 
\begin{equation*}
\E_f(\s):=
\begin{dcases}
\int_Mf(\t)\dH^1&\text{if }\s=U(\t,\xi,M),\\
\quad +\oo&\text{in the other cases.}
\end{dcases}
\end{equation*}
Given a sequence of $N+1$ distinct points  $S=(x_0,\dots,x_N)\in \Om^{N+1}$, we consider the minimization of $\E_f(\s)$ for $\s\in \M(\ov\Om,\R^2)$ satisfying the constraint
\begin{equation}\label{eq:divconstraint1}
\nb\cdot\s=N\d_{x_0}-\sum_{i=1}^N\d_{x_i}\quad\mbox{ in }\D'(\R^2).
\end{equation}
The distributional support of such $\sigma$  connects the source in $x_0$ to the sinks in $x_1,\cdots,x_N$. In general, a model for branched transport connecting  a set of sources to a set of sinks (represented by two discrete probabilistic measures supported on a set of points in $\Om$) is obtained by choosing $f(\t)=|\t|^\a$ with $0<\a<1$ and minimizing the associated functional under a divergence constraint similar to equation~\eqref{eq:divconstraint1}. The direct numerical optimization of the functional $\E_f$  is not easy because we do not know {\it a priori} the topological properties of the tree $M$. For this reason it is interesting to optimize an ``approximate" functional defined on more flexible objects such as functions. Such approximate model has been introduced in~\cite{OS2011} where the authors study the $\Gamma$-convergence (see~\cite{Bra2}) of a family of functionals  inspired by the well known work of Modica and Mortola~\cite{Mod_Mort}. 
Another effort in this direction can be found in the work~\cite{Bon_Lem_San} where is studied an approximation to the Steiner Minimal Tree problem~(\cite{Gilb_Poll},~\cite{Ambr_Tilli} and~\cite{Paol_Step}) by means of analogous techniques.

Here, we consider variational approximations of some energies of the form $\E_f$ through a family of functionals modeled on the Ambrosio-Tortorelli functional~\cite{Amb_Tort1}. For being more precise, we need to introduce some material. Let $\rho\in C^\infty_c(\R^2,\R_+)$ be a classical radial mollifier with $\supp \rho\subset B_1(0)$ and $\int \rho=1$. For $\eps\in (0,1]$, we set $\rho_\eps(x)=\eps^{-2}\rho(\eps^{-1}x)$ and we define the space  $V_\eps(\Om)$ of square integrable vector fields whose weak divergence satisfy the constraint
\begin{equation}\label{eq:divconstraint2}
\nb\cdot\s_\eps\, =\, \lt(N\delta_{x_0} - \sum_{j=1}^N \delta_{x_j} \rt)\ast \rho_\eps.
\end{equation}
For $\eta=\eta(\eps)>0$, we note 
\begin{equation*}\label{eq:Weps}
W_\eps(\Om)\,=\,\lt\{\phi\in H^1(\Om)\,:\, \eta \leq\phi\leq 1\mbox{ in }\Om,\ \phi\equiv 1\mbox{ on }\de\Om\rt\}.
\end{equation*}
Then we define the energy  $\F_\eps:\M(\Om,\R^2)\times L^1(\Om)\rw[0,+\oo]$ as
\begin{equation}\label{eq:functional}
\F_\eps(\s,\phi):=
  \begin{dcases}
    \;\int_{\Om} \frac{1}{2\eps}\phi^2 |\s|^2 \dx+ \int_{\Om} \frac{\eps}{2}|\nb \phi|^2  + \frac{(1-\phi)^2}{2\eps} \dx, & \text{if } (\s,\phi)\in V_\eps(\Om)\times W_\eps(\Om),\\
    \quad+\oo & \text{in the other cases.}
  \end{dcases}
\end{equation}
The first integral in the definition of the energy will be refered to as the ``constraint component'' while the second integral will be regarded as the ``Modica-Mortola component''. 
Let us briefly describe the qualitative properties of the associated minimization problem. First notice that the constraint~\eqref{eq:divconstraint2} enforces $\s$ to be non zero on a set connecting $S$. Next, the constraint component of the energy strongly penalizes $\phi^2 |\s|^2$  so that $\phi$ should be small in the region where $|\sigma|$ is large. On the other hand  the behavior of $\phi$ is controlled by the Modica Mortola component that forces $\phi$ to be close to $1$ away  from a one-dimensional set as $\eps$ converges to 0. As a consequence, we expect the support of $\sigma$ and the energy to concentrate on a one dimensional set connecting $S$. The main part of the paper consists in making rigorous and quantitative this analysis.  
\medskip

From now on, we assume that there exists some $\a\geq 0$ such that 
\begin{equation}\label{eq:limalpha}
\dfrac\eta{\eps}\ \stackrel{\eps\downarrow0}\longrightarrow\ \a.
\end{equation}  
We note $\M_S(\ov\Om)$ the set of $\R^2$-valued measures $\s\in \M(\R^2,\R^2)$ with support in $\ov{\Om}$ such that the constaint~\eqref{eq:divconstraint1} holds. We define the limit energy $\E_\a:\M(\ov \Om,\R^2)\times L^1(\Om)\rw[0,+\oo]$  as
  \begin{equation}\label{eq:limit}
   \E_\a(\s,\phi)=
   \begin{dcases}
    \;\int_{M}(1+\a\,\t)\dH^1 & \text{if } \phi\equiv1,\ \s\in \M_S(\ov\Om)\mbox{ and }\s=U(M,\t,\xi),\\
  ~  \quad+\oo &\text{in the other cases.}
   \end{dcases}
  \end{equation}
  We prove the $\Gamma$-convergence of the sequence $(\F_\eps)$ to the energy $\E_\a$ as $\eps\dw0$. 
  More precisely the convergence holds in $\M(\ov\Om,\R^2)\times L^1(\Om)$ where $\M(\ov\Om,\R^2)$ is endowed with the weak star topology and $L^1(\Om)$ is endowed with its classical strong topology.\\
    We establish the following lower bound.
\begin{theorem}[$\Gamma-\liminf$]\label{teo:sigma_liminf}
For any sequence $(\s_\eps,\phi_\eps)\subset\M(\Om,\R^2)\times L^1(\Om)$ such that $\s_\eps\rwstar\s$ and $\phi_\eps\rw\phi$ in the $L^1(\Om)$ topology, with $(\s,\phi)\in\M(\Om,\R^2)\times L^1(\Om)$
\begin{equation*}
\liminf_{k\rw+\oo}\F_\eps(\s_\eps,\phi_\eps)\geq \E_\a(\s,\phi).
\end{equation*}
\end{theorem}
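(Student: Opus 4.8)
The plan is to follow the standard two-step strategy for phase-field lower bounds: first reduce the global inequality to a pointwise density estimate for the energy measures, and then identify that density through a one-dimensional optimal-profile computation in which the coupling between $\phi$ and $\s$ produces the coefficient $1+\a\t$. For the reductions, one may assume $\liminf_\eps\F_\eps(\s_\eps,\phi_\eps)<+\oo$ and, along a subsequence, that this liminf is a limit with a uniform bound $\F_\eps\le C$. The potential term then gives $\int_\Om(1-\phi_\eps)^2\,\dx\le 2\eps C\to0$, so $\phi_\eps\to1$ in $L^2(\Om)$; combined with $\phi_\eps\to\phi$ in $L^1$ this forces $\phi\equiv1$, the first requirement in \eqref{eq:limit}. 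Passing to the weak-$*$ limit in \eqref{eq:divconstraint2} and using $\rho_\eps\to\d_0$ yields $\nb\cdot\s=N\d_{x_0}-\sum_i\d_{x_i}$, i.e. $\s\in\M_S(\ov\Om)$. It then remains (i) to prove that finite energy forces $\s$ to be rectifiable, $\s=U(M,\t,\xi)$ — otherwise $\E_\a(\s,\phi)=+\oo$ — and, granting this, (ii) to establish $\liminf_\eps\F_\eps\ge\int_M(1+\a\t)\dH^1$.

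For step (ii) I would introduce the energy measures $\mu_\eps\in\M(\ov\Om)$, $\mu_\eps(A)=\F_\eps(\s_\eps,\phi_\eps;A)$, extract $\mu_\eps\rwstar\mu$, and use lower semicontinuity to get $\liminf_\eps\F_\eps\ge\mu(\Om)$. By the Besicovitch differentiation theorem it suffices to show $d\mu/d(\H^1\restr M)(x_0)\ge 1+\a\,\t(x_0)$ at $\H^1$-a.e. $x_0\in M$, namely at points where $M$ has an approximate tangent $\xi(x_0)$ and $\t$ an approximate value. I would bound $\mu(\ov{B_r(x_0)})\ge\limsup_\eps\F_\eps(\s_\eps,\phi_\eps;B_r(x_0))$ for fixed small $r$, \emph{sending $\eps\to0$ before $r\to0$}: this ordering is essential, since inside the \emph{bounded} ball $B_r$ the divergence constraint forces a flux $\approx\t(x_0)$ across every transversal, so if $\phi_\eps$ did not dip the constraint term $\tfrac1{2\eps}\int\phi_\eps^2|\s_\eps|^2$ would blow up — concentration is thereby forced, whereas after rescaling $B_r$ to unit size this protection would be lost. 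Slicing $B_r$ by the lines in the normal direction $\nu_0=\xi(x_0)^\perp$ and keeping the \emph{full} $|\s_\eps|^2$ while discarding only the tangential part of $|\nb\phi_\eps|^2$, Fubini bounds $\F_\eps(\cdot\,;B_r)$ from below by the average over transversals of the one-dimensional energy; on each transversal $\int|\s_\eps|\ge|\int\s_\eps\cdot\xi|$ is the conserved cross-sectional flux $\approx\t(x_0)$.

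The heart is then the one-dimensional lemma: for a bounded interval $I$ with $\phi\equiv1$ near $\partial I$,
\[
e_\eps(m):=\inf\Big\{\int_I\tfrac1{2\eps}\phi^2s^2+\tfrac\eps2(\phi')^2+\tfrac{(1-\phi)^2}{2\eps}\dt\ :\ \eta\le\phi\le1,\ s\ge0,\ \int_I s\,\dt=m\Big\}
\]
satisfies $\liminf_{\eps\dw0}e_\eps(m)\ge1+\a m$ (and matches it). The mechanism, which also dictates the recovery sequence, is transparent: $\phi$ drops from $1$ to the floor $\eta$ through two Modica--Mortola walls costing $\to1$, and over a plateau of width $\sim\eta m$ on which $\phi\approx\eta$ and $s$ concentrates, the potential and constraint terms each contribute $\sim\eta m/(2\eps)$, summing to $\eta m/\eps\to\a m$ by \eqref{eq:limalpha}; for $\a=0$ one recovers the pure Steiner length. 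Feeding $m=\t(x_0)$ into this estimate, integrating the transversal bounds (Fatou), dividing by $2r$ and letting $r\to0$ gives the density bound, and integrating over $M$ closes step (ii).

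I expect two points to be the real difficulties. The first is the \emph{sharp} constant in the one-dimensional lemma: a crude split of the potential between ``walls'' and ``plateau'' yields a bound off by a multiplicative factor, so the optimal allocation (equivalently, eliminating $s$ via $s\propto\phi^{-2}$ and analysing the resulting nonlocal profile) must be carried out carefully to obtain exactly $1+\a m$, together with the robustness needed to absorb the slack coming from the slicing ($\int s\ge m$, endpoint values only $\to1$). The second, and in my view the genuine obstacle, is the \emph{rectifiability} of the limit $\s$: because $\phi_\eps\to1$ leaves no limiting jump set, the one-dimensional concentration is visible only through $\s$, and one must deduce $\s=U(M,\t,\xi)$ from the energy bound. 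I would obtain this from the structure theory for measures with prescribed divergence (Smirnov decomposition, as in \cite{OS2011}) together with the positivity of the $1$-density furnished by the estimate above — the step that genuinely exploits the two-dimensional setting.
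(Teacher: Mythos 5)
Your reduction (finite energy along a subsequence, $\phi\equiv1$, $\sigma\in\M_S(\ov\Om)$) and your one-dimensional mechanism --- two Modica--Mortola walls contributing $1$, plus a plateau of width $\sim\eta m$ on which the constraint and potential terms each cost $\eta m/(2\eps)$, giving $\a m$ by~\eqref{eq:limalpha} --- are exactly what the paper proves in Step~2.2 of Theorem~\ref{teo:localResult}, and your Besicovitch/blow-up organization of step~(ii) is a legitimate alternative to the paper's use of Lemma~\ref{lem:braides}. However, there are two genuine gaps, and both are closed in the paper by the one device you omit from the lower-bound step: for $\eps$ small, $\s_\eps$ is divergence free away from $S$, hence on any simply connected $O$ it is the rotated gradient of some $u_\eps\in H^1(O)$, and $\F_\eps(\cdot;O)$ becomes an Ambrosio--Tortorelli-type free-discontinuity energy for $u_\eps$.

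The first gap is your claim that the divergence constraint ``forces a flux $\approx\t(x_0)$ across \emph{every} transversal'' of $B_r(x_0)$. Weak-$*$ convergence of $\s_\eps$ only gives that the \emph{average} over transversals of the slice flux tends to $\approx\t(x_0)$, and since the limiting one-dimensional cost ($1+\a m$ for $m>0$, $0$ for $m=0$) is not convex at $m=0$, an average-flux bound loses the constant $1$: flux $2\t$ on half the slices and $0$ on the other half has the right average but yields only $\tfrac12+\a\t$. To control the flux on \emph{a.e.}\ slice one needs pointwise a.e.\ convergence of the potentials $u_\eps$ (the slice flux is a difference of endpoint values of $u_\eps$), i.e.\ precisely the BV compactness and the slicing Theorem~\ref{teo:braides} that the paper invokes. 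The second gap is rectifiability of $\s$: you rightly flag it as the main obstacle but only gesture at Smirnov decomposition plus density bounds. In the paper it again falls out of the local potential: Theorem~\ref{teo:localResult} shows $u_\eps\rwstar u\in\P(O)$, so $\s\restr O=-[u]\nu_u^\perp\H^1\restr(J_u\cap O)$ is rectifiable, and the possible atoms at the points of $S$ are excluded via Lemma~\ref{lemma:completediv} together with the fact that $|Du|$ cannot charge $\H^1$-null sets. As written, then, your proposal contains the correct quantitative heart of the proof but leaves open the two structural steps that the two-dimensional potential representation is introduced to resolve.
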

 In this statement and throughout the paper, we make a small abuse of language by noting $(a_\eps)_{\eps\in(0,1]}$ and calling sequence a family $\{a_\eps\}$ labeled by a continuous parameter $\eps\in(0,1]$. In the same spirit, we call subsequence of $(a_\eps)$, any sequence $(a_{\eps_j})$ with $\eps_j\rw0$ as $j\rw+\oo$.\\
 To complete the $\Gamma$-convergence analysis, we establish the matching upper bound. 
\begin{theorem}[$\Gamma-\limsup$]\label{teo:sigma_limsup}
For any $(\s,\phi)\subset\M(\Om,\R^2)\times L^1(\Om)$ there exists a sequence $(\s_\eps,\phi_\eps)$ such that $\s_\eps\rwstar\s$ and $\phi_\eps\rw\phi$ in the $L^1(\Om)$ topology and 
\begin{equation*}
\limsup_{k\rw+\oo}\F_\eps(\s_\eps,\phi_\eps)\leq \E_\a(\s,\phi).
\end{equation*}
\end{theorem}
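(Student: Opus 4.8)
The plan is to build an explicit recovery sequence, first reducing to a convenient dense class and then solving a one--dimensional transverse cell problem on each piece of the network.

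\textbf{Reduction.} If $\E_\a(\s,\phi)=+\oo$ there is nothing to prove, so I may assume $\phi\equiv1$ and $\s=U(M,\t,\xi)\in\M_S(\ov\Om)$ with finite energy. The $\Gamma$--$\limsup$ functional $\F''(\s,\phi):=\inf\big\{\limsup_{\eps\dw0}\F_\eps(\s_\eps,\phi_\eps)\big\}$ is sequentially lower semicontinuous for the product topology, so it suffices to produce recovery sequences on a class that is dense in energy. I would approximate $\s$ by \emph{polyhedral} admissible measures --- finite unions of segments carrying constant multiplicities and still satisfying the divergence constraint~\eqref{eq:divconstraint1} --- with $\E_\a$ converging to $\E_\a(\s,1)$; this is the standard polyhedral approximation of one--dimensional transport networks with prescribed boundary. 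A diagonal argument then reduces the construction to a single segment, together with the finitely many junctions and terminal points.

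\textbf{The transverse cell problem.} Fix a segment of multiplicity $m=\t$ oriented by $\xi$, and look for $\s_\eps=\psi\,\xi$, $\phi_\eps=\phi$ depending only on the signed distance $s$ to the segment. Minimizing the constraint term over mass profiles $\psi\ge0$ with prescribed transverse flux $\int\psi\,ds=m$ yields the optimal profile $\psi\propto\phi^{-2}$ and collapses the transverse energy per unit length to
\[
\frac{m^2}{2\eps\int\phi^{-2}\,ds}+\int\Big(\frac{\eps}{2}|\phi'|^2+\frac{(1-\phi)^2}{2\eps}\Big)\,ds .
\]
I then take $\phi\equiv\eta$ on a core interval of width $w$ and let $\phi$ relax to $1$ through the standard Modica--Mortola profile on each side. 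The two transitions contribute $1$ (twice the one--sided cost $1/2$), while the two $\eta$--dependent terms balance at $w=m\eta$ and contribute $m\eta/\eps\to\a m$ by~\eqref{eq:limalpha}. Thus the transverse energy tends to $1+\a m$, and integrating along the segment gives local energy converging to $\int(1+\a\t)\dH^1$. Since $w\sim m\eta\sim m\a\eps$ is comparable to $\eps$, the whole construction lives at scale $\eps$.

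\textbf{Global assembly.} I glue these transverse profiles along every segment, setting $\phi_\eps\equiv1$ off the $O(\eps)$--tubes. At the Steiner points the tubes overlap on a region of diameter $O(\eps)$ where mass is conserved, so the constraint holds there; keeping $\phi_\eps=\eta$ on the overlap, the extra energy is $O(\eps)$. At each terminal point $x_i$ the constraint forces $\nb\cdot\s_\eps=(\cdots)\ast\rho_\eps$, a smooth bump at scale $\eps$; on $B_{C\eps}(x_i)$ I build a field carrying the incoming flux and matching this bump, again with $\phi_\eps=\eta$. Because $\phi_\eps=\eta\sim\a\eps$ there, the constraint term is $O(\eta^2 m^2/\eps)=O(\eps)$ and the Modica--Mortola term is $O(\eps)$, so each terminal ball contributes $o(1)$. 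Summing the finitely many pieces gives $\limsup_{\eps\dw0}\F_\eps(\s_\eps,\phi_\eps)\le\E_\a(\s,1)$; moreover $\s_\eps\rwstar\s$ since the flux concentrates on $M$ with the correct orientation while the total mass stays bounded, and $\phi_\eps\to1$ in $L^1(\Om)$ since $\phi_\eps\equiv1$ off tubes of total area $O(\eps)$.

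\textbf{Main obstacle.} The delicate point is \emph{not} the scalar cell problem but the exact, global satisfaction of $\s_\eps\in V_\eps(\Om)$ while controlling the energy. Unlike the Modica--Mortola term, the constraint~\eqref{eq:divconstraint2} is nonlocal, so $\s_\eps$ must be assembled consistently across junctions and, above all, near the terminal points, where it has to reproduce the mollified Diracs at scale $\eps$. This is precisely where the hypothesis $\eta/\eps\to\a$ enters essentially: it forces the dip value of $\phi_\eps$, the core width $w$, and the mollification scale to be of the same order $\eps$, which is exactly what makes the spurious energy of the junctions and of the terminal balls vanish in the limit.
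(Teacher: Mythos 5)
Your construction follows essentially the same route as the paper: reduce by a density-in-energy argument to measures carried by finitely many segments, then build the recovery pair with $\phi_\eps\equiv\eta$ on a core tube of width $\sim\a\t\eps$ carrying a constant transverse flux, optimal Modica--Mortola transitions on either side (total cost $1$ per unit length), and local fields at the junctions and terminal points matching the mollified Diracs with $O(\eps)$ extra energy --- your balance $m\eta/\eps\to\a m$ is exactly the paper's computation with $a_\eps=\t\a\eps/2$. The only caveat is that the polyhedral density step you dismiss as standard is where the paper invests real work (Lemma~\ref{lem:uapproximation}, passing to the dual $BV$ potential $u$ with $Du=\s^\perp+\g^\perp$ and invoking~\cite{Bel_Cham_Gold}, with care to preserve the divergence constraint), so it should be acknowledged as a substantive lemma rather than a citation-free fact.
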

 Moreover, under the assumption $\a>0$ we prove the equicoercivity of the  sequence $(\F_\eps)$.
\begin{theorem}[Equicoercivity]\label{teo:sigma_equicoercive}
Assume $\a>0$. For any sequence $(\s_\eps,\phi_\eps)_{\eps\in(0,1]}\subset\M(\Om,\R^2)\times L^1(\Om)$ with uniformly bounded energies, {\it i.e.}
\[
\sup_{\eps} \F_\eps(\s_\eps,\phi_\eps)\ <\ +\oo,
\] 
there exist a subsequence $\eps_j\dw0$ and a measure $\s\in\M_S(\ov\Om,\R^2)$ such that   $\s_{\eps_j}\rightarrow \s$ with respect to the weak-$*$ convergence of measures and $\phi_{\eps_j} \rightarrow 1$ in $L^1(\Om)$. 
Moreover,  $\s$ is a rectifiable measure ({\it i.e.} it is of the form $\s=U(M,\t,\xi)$).
\end{theorem}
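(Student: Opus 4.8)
The plan is to prove the three assertions of the statement in turn: the $L^1$ convergence $\phi_{\eps_j}\to1$, the weak-$*$ compactness of $(\s_\eps)$, and the rectifiability of the limit. Throughout, write $C:=\sup_\eps\F_\eps(\s_\eps,\phi_\eps)<+\oo$. The convergence of $\phi_\eps$ is the easy part and comes for free from the Modica--Mortola component: since
\[
\int_\Om (1-\phi_\eps)^2\dx\ \le\ 2\eps\,\F_\eps(\s_\eps,\phi_\eps)\ \le\ 2C\eps\ \longto\ 0,
\]
we get $\phi_\eps\to1$ in $L^2(\Om)$, hence in $L^1(\Om)$ because $\Om$ is bounded; in particular any $L^1$-limit $\phi$ must equal $1$.

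The crux is a uniform bound on the total mass $|\s_\eps|(\Om)=\int_\Om|\s_\eps|\dx$, and this is exactly where the hypothesis $\a>0$ is used. The crude estimate based only on $\phi_\eps\ge\eta$ fails, since it forgets that $\phi_\eps$ is small only on a thin set; instead I would couple the constraint component and the Modica--Mortola component through Young's inequality. Splitting $|\s_\eps|=\phi_\eps|\s_\eps|+(1-\phi_\eps)|\s_\eps|$ and applying $ab\le\tfrac1{2\eps}a^2+\tfrac\eps2 b^2$ to the second product with $a=\phi_\eps|\s_\eps|$ and $b=(1-\phi_\eps)/\phi_\eps$ gives
\[
\int_\Om|\s_\eps|\dx\ \le\ \Big(\int_\Om\phi_\eps^2|\s_\eps|^2\dx\Big)^{1/2}|\Om|^{1/2}+\frac1{2\eps}\int_\Om\phi_\eps^2|\s_\eps|^2\dx+\frac{\eps}{2}\int_\Om\frac{(1-\phi_\eps)^2}{\phi_\eps^2}\dx.
\]
The first two terms are bounded by $\F_\eps\le C$ (the first in fact tends to $0$), while for the last I would use $\phi_\eps\ge\eta$ together with $\int_\Om(1-\phi_\eps)^2\dx\le 2C\eps$ to obtain $\tfrac\eps2\int_\Om\frac{(1-\phi_\eps)^2}{\phi_\eps^2}\dx\le \tfrac{C\eps^2}{\eta^2}$, which stays bounded \emph{precisely because} $\eta/\eps\to\a>0$ forces $\eps^2/\eta^2$ to be bounded. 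Hence $\sup_\eps|\s_\eps|(\Om)<+\oo$. I expect this coupled estimate to be the main obstacle: it is the one place where the interplay between the two components of the energy, and the role of $\a>0$, is genuinely needed (indeed for $\a=0$ the mass is not expected to stay bounded).

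With the mass bound in hand, weak-$*$ compactness of bounded sequences of measures yields a subsequence $\eps_j\dw0$ and a measure $\s\in\M(\ov\Om,\R^2)$ with $\s_{\eps_j}\rwstar\s$; since each $\s_\eps$ is carried by $\ov\Om$, so is $\s$. To see that $\s\in\M_S(\ov\Om)$ I would pass the divergence constraint~\eqref{eq:divconstraint2} to the limit: testing against any $\zeta\in C^\infty_c(\R^2)$ and using that $\rho_\eps$ is an approximate identity, one has $\int_\Om\s_{\eps_j}\cdot\nb\zeta\dx\to\int\nb\zeta\cdot\mathrm{d}\s$ on the left and, on the right, $-\langle (N\d_{x_0}-\sum_{i=1}^N\d_{x_i})\ast\rho_{\eps_j},\zeta\rangle\to-\langle N\d_{x_0}-\sum_{i=1}^N\d_{x_i},\zeta\rangle$. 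Therefore $\nb\cdot\s=N\d_{x_0}-\sum_{i=1}^N\d_{x_i}$, i.e.\ the constraint~\eqref{eq:divconstraint1} holds and $\s\in\M_S(\ov\Om)$.

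It remains to prove that $\s$ is rectifiable, and here I would simply invoke the already established lower bound. Since $\s_{\eps_j}\rwstar\s$ and $\phi_{\eps_j}\to1$ in $L^1(\Om)$, Theorem~\ref{teo:sigma_liminf} gives
\[
\E_\a(\s,1)\ \le\ \liminf_{j\to+\oo}\F_{\eps_j}(\s_{\eps_j},\phi_{\eps_j})\ \le\ C\ <\ +\oo .
\]
By the very definition~\eqref{eq:limit} of $\E_\a$, with $\phi\equiv1$ and $\s\in\M_S(\ov\Om)$ already known, finiteness of $\E_\a(\s,1)$ rules out the ``other cases'' and forces $\s=U(M,\t,\xi)$; that is, $\s$ is a rectifiable measure, which concludes the proof. (Alternatively one could obtain rectifiability directly from a closure theorem for rectifiable one-dimensional measures with prescribed divergence, but the route through Theorem~\ref{teo:sigma_liminf} is the most economical.)
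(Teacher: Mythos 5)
Your treatment of the convergence $\phi_\eps\to1$, of the uniform mass bound, and of the passage of the divergence constraint to the limit is correct. The Young-inequality computation is a cosmetic variant of the paper's argument (which splits $\Om$ into $\{\phi_\eps<1/2\}$ and $\{\phi_\eps\ge1/2\}$ and applies Cauchy--Schwarz), both yielding a bound of order $1+\eps^2/\eta^2$, finite precisely because $\eta/\eps\to\a>0$; you correctly identify this as the one place where the hypothesis $\a>0$ is needed.

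The gap is in the rectifiability step. Invoking Theorem~\ref{teo:sigma_liminf} is circular within the logical architecture of the paper: the proof of Theorem~\ref{teo:sigma_liminf} \emph{begins} by applying Theorem~\ref{teo:sigma_equicoercive} to obtain a rectifiable limit $\s=U(M_\s,\t_\s,\xi_\s)$, which it needs in order to define the comparison measure $\lambda=\a|\s|+\H^{1}\restr M_\s$ fed into Lemma~\ref{lem:braides}. So the rectifiability you are trying to prove is an input to, not a consequence of, the $\Gamma$-liminf inequality. The paper instead derives rectifiability from the independently proved local result (Theorem~\ref{teo:localResult}): on every simply connected open set $O$ with $\ov{O}\cap S=\varnothing$ one has $\nb\cdot\s_\eps=0$ for small $\eps$, hence $\s_\eps=\nb^\perp u_\eps$ for some zero-mean $u_\eps\in H^1(O)$, and the local compactness gives $u_\eps\rwstar u\in\P(O)$, so that $\s\restr O=-[u]\nu_u^\perp\H^1\restr(J_u\cap O)$ is rectifiable. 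Covering $\ov\Om\setminus S$ by finitely many such sets only yields $\s=U(M_\s,\t_\s,\xi_\s)+\sum_j c_j\delta_{x_j}$: a possible atomic part concentrated at the points of $S$ survives this local analysis, and your argument says nothing about it. The paper excludes it by a separate device (Lemma~\ref{lemma:completediv} together with property~\eqref{BVproperty}): adding a rectifiable $\g$ with $\nb\cdot(\s+\g)=0$ one writes $\s^\perp+\g^\perp=Du$ for some $u\in BV(\Om)$, and since $\H^1(S)=0$ forces $|Du|(S)=0$, all the coefficients $c_j$ vanish. Without some version of these two ingredients --- the local identification of $\s$ as a rotated gradient of a piecewise constant function, and the exclusion of atoms at $S$ --- the final claim of the theorem is not established.
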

\begin{remark}
Observe that letting $\a=0$ in equation~\eqref{eq:limit} we obtain $\E_0(\sigma)=\H^1(\{x\in M\,:\, \t(x)>0\})$ where $\s=U(M,\t,\xi)$. This is exactly the functional associated with the Steiner Minimal Tree problem. Unfortunately, the hypothesis $\a>0$ is necessary in the compactness Theorem~\ref{teo:sigma_equicoercive}.

The fact that we are working in dimension $2$ is fundamental for the proof of Theorem~\ref{teo:sigma_liminf} as it allows to locally rewrite the vector field $\s_\eps$ as the rotated gradient of a function.
\end{remark}

\textbf{Structure of the paper:} In Section~\ref{sec:Not} we introduce some notation and several tools and notions on $SBV$ functions and vector field measures. In Section~\ref{sec:LocRes} we study a first family of energies obtained by substituting $\nb u$ for $\s$ in the definition of $\F\e$. In Section~\ref{sec:Comp} we prove the equicoercivity result, Theorem~\ref{teo:sigma_equicoercive} and we establish the lower bound stated in Theorem~\ref{teo:sigma_liminf}. In Section~\ref{sec:UppBound} we prove the upper bound of Theorem~\ref{teo:sigma_limsup}. Finally, in the last section, we present and discuss various numerical simulations.

\section{Notation and Preliminary Results}\label{sec:Not}
In the following $\Om\subset\subset\hat\Om\subset\R^d$ are bounded open convex sets. Given $X\subset\R^d$ (in practice $X=\Om$ or $X=\hat\Om)$, we denote by $\A(X)$ the class of all open subsets of $X$  and by $\A_S(X)$ the subclass of all simply connected open sets $O\subset X$ such that $\overline{O}\cap S=\varnothing$. We denote by $(e_1,\dots,e_d)$ the canonical orthonormal basis of $\R^d$, by $|\cdot|$   the euclidean norm and by $\langle\cdot,\cdot\rangle$) the euclidean scalar product in $\R^d$.  The open ball of radius $r$ centered at $x\in\R^d$  is denoted by $B_r(x)$. The $(d-1)$-dimensional Hausdorff measure in $\R^d$ is denoted by $\H^{d-1}$. We write $|E|$ to denote the Lebesgue measure of a measurable set $E\subset \R^d$. When $\mu$ is a Borel meaure and $E\subset \R^d$ is a Borel set, we denote by $\mu\restr E$ the measure defined as $\mu\restr E(F)=\mu(E\cap F)$.   \\
Let us remark that from Section~\ref{sec:Comp} onwards, we work in dimension $d=2$.\\
For  $O\in\A(\Om)$, the functional $\F_\eps(\cdot,\cdot;O)$ is the functional obtained by substuting $O$ for $\Om$ in the definition of $\F_\eps$ (see~\eqref{eq:functional}). Similarly we define the local version $\E_\a(\cdot,\cdot;O)$ of $\E_\a$.

\subsection{BV($\Om$) functions and Slicing}
BV($\Om$) is the space of functions $u\in L^1(\Om)$ having as distributional derivative $Du$ a measure with finite total variation. For $u\in BV(\Om)$, we denote by $S_u$ the complement of the Lebesgue set of $u$, that is $x\not \in S_u$ if and only if
\[
\lim_{\rho\rw0^+}\frac{1}{|B_\rho(x)|}\int_{B_\rho(x)}|u(y)-z|\dy=0
\] for some $z\in\R$. We say that $x$ is an approximate jump point of $u$ if there exist $\xi \in \S^{d-1}$ and distinct points $a,b \in \R$ such that 
\[\lim_{\rho\dw0}\frac{1}{|B^+_\rho(x,\xi)|}\int_{B^+_\rho(x,\xi)}|u(y)-a|\dy=0\qquad\text{and}\qquad\lim_{\rho\dw0}\frac{1}{|B^-_\rho(x,\xi)|}\int_{B^+_\rho(x,\xi)}|u(y)-b|\dy=0,\] 
where $B^\pm_\rho(x,\xi):=\{y\in B_\rho(x)\;:\;\pm\langle y-x,\xi\rangle\geq0\}$. Up to a permutation of $a$ and $b$ and a change of sign of $\xi$, this characterizes the triplet $(a,b,\xi)$ which is then denoted by $(u^+,u^-,\nu_u)$. The set of approximate jump points is denoted by $J_u$. The following theorem holds~\cite{Am_Fu_Pal}.
\begin{theorem}\label{teo:federer}
 The set $S_u$ is countably $\H^{d-1}$-rectifiable and $\H^{d-1}(S_u\setminus J_u)=0$. Moreover $Du\restr J_u=(u^+-u^-)\nu_u\H^{d-1}\restr J_u$ and 
 \[Tan^{d-1}(J_u,x)=\nu_u(x)^\perp\]
 for $\H^{d-1}$-a.e. $x\in J_u$.
\end{theorem}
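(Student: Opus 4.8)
The plan is to deduce this classical structure theorem (the Federer--Vol'pert theorem) from the two cornerstones of the theory of sets of finite perimeter---De Giorgi's structure theorem and Federer's theorem---transferred to the $BV$ setting by the coarea formula. First I would introduce the superlevel sets $E_t:=\{u>t\}$ and invoke the coarea formula
\[
Du=\int_\R D\chi_{E_t}\dt,\qquad |Du|(\Om)=\int_\R P(E_t,\Om)\dt,
\]
which guarantees in particular that $E_t$ has finite perimeter in $\Om$ for a.e.\ $t\in\R$. For each such $t$, De Giorgi's theorem supplies a measure-theoretic inner normal $\nu_{E_t}$ for which the reduced boundary $\partial^*E_t$ is countably $\H^{d-1}$-rectifiable and $D\chi_{E_t}=\nu_{E_t}\,\H^{d-1}\restr\partial^*E_t$, while Federer's theorem yields $\H^{d-1}(\partial^eE_t\setminus\partial^*E_t)=0$, where $\partial^eE_t$ denotes the essential boundary.

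Next I would control $S_u$ through these level-set boundaries. A density estimate shows that if $u$ has an approximate limit $z$ at $x$ (that is, $x\notin S_u$), then $x$ has density one in $E_t$ for every rational $t<z$ and density zero for every rational $t>z$, so $x\notin\partial^eE_t$ whenever $t\in\Q\setminus\{z\}$. Taking contrapositives, modulo an $\H^{d-1}$-negligible set one obtains
\[
S_u\ \subseteq\ \bigcup_{t\in\Q}\partial^eE_t\ =\ \bigcup_{t\in\Q}\partial^*E_t,
\]
the last equality holding up to $\H^{d-1}$-null sets by Federer's theorem. Since a countable union of countably $\H^{d-1}$-rectifiable sets is again countably $\H^{d-1}$-rectifiable, this proves that $S_u$ is countably $\H^{d-1}$-rectifiable.

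The remaining and hardest point is to upgrade ``approximate discontinuity'' to ``approximate jump'' at $\H^{d-1}$-a.e.\ point, i.e.\ to show $\H^{d-1}(S_u\setminus J_u)=0$. At an $\H^{d-1}$-a.e.\ point $x\in S_u$ the rectifiability furnishes an approximate tangent plane, and I would blow $u$ up around $x$: because the blow-up of each finite-perimeter set $E_t$ at a reduced-boundary point is a half-space with normal $\nu_{E_t}(x)$, one checks that these normals align along $S_u$ and that the rescalings of $u$ converge to a function taking exactly two values across a single hyperplane. This produces the one-sided traces $u^+,u^-$ and the normal $\nu_u$, so $x\in J_u$. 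The same blow-up, together with the elementary identity $u^+-u^-=\int_\R\chi_{\{u^-<t<u^+\}}\dt$ and the level-set representation of $D\chi_{E_t}$, gives after integration in $t$ the formula $Du\restr J_u=(u^+-u^-)\nu_u\,\H^{d-1}\restr J_u$, and the alignment of normals identifies the tangent as $Tan^{d-1}(J_u,x)=\nu_u(x)^\perp$. The main obstacle is exactly this blow-up/alignment step: passing rigorously from the per-level half-space structure to a single two-valued profile requires a delicate Fubini-type argument in the variable $t$ coupled with the density estimates above, which is where Federer's theorem does the essential work. Since the result is classical, in practice I would simply cite~\cite{Am_Fu_Pal} for the complete argument.
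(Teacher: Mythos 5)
The paper offers no proof of this classical Federer--Vol'pert structure theorem: it simply states it with a citation to \cite{Am_Fu_Pal}, and your sketch correctly reproduces the standard argument from that reference (coarea formula, De Giorgi's structure theorem for the level sets $\{u>t\}$, Federer's theorem on the essential boundary, and the blow-up step producing the triplet $(u^+,u^-,\nu_u)$), correctly identifying the passage from $S_u$ to $J_u$ as the delicate point. Since you conclude by deferring to \cite{Am_Fu_Pal} exactly as the paper does, this is essentially the same approach.
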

We write the Radon-Nikodym decomposition of $Du$ as $Du= \nb u \dx+D^s u$. Setting $D^c u=D^s u \restr (\Om \setminus S_u)$ we get the decomposition 
\[Du=\nb u \dx+ (u^+-u^-)\nu_u\H^{d-1}\restr J_u+D^c u.\]
Moreover the Cantor part is such that if $\H^{d-1}(E)<\infty$, we have $|D^cu|(E)=0$ \cite[Thm.~3.92]{Am_Fu_Pal}. In particular, we have the following useful consequence
\begin{equation}
\label{BVproperty}
\H^{d-1}(E)=0\quad\Longrightarrow\quad |Du|(E)=0.
\end{equation}
We frequently use the notation $[u]$ for the jump function $(u^+-u^-):J_u\rw\R$.\\
When $d=1$ we use the symbol $u'$ in place of $\nb u$ and $u(x^\pm)$ to indicate the right and left limits of $u$ at $x$. \medskip

Let us introduce the space of special functions of bounded variation and a variant:
\[SBV(\Om):=\{u\in BV(\Om):D^cu=0\},\]
\[GSBV(\Om):=\{u\in L^1(\Om):\max(-T,\min(u,T))\in SBV(\Om)\;\forall T\in\R\}.\]
Eventually, in Section~\ref{sec:LocRes}, the following space of piecewise constant functions will be useful. 
\[\P(\Om)=\{u\in GSBV(\Om):\nb u=0\}.\]
To conclude this section we recall the slicing method for functions with bounded variation. Let $\xi\in\S^{d-1}$ and let 
\[\Pi_\xi:=\{y\in\R^d:\langle y,\xi\rangle=0\}.\]
If $y\in\Pi_\xi$ and $E\in\R^d$, we define the one dimensional slice 
\[E_{\xi,y}:=\{t\in\R: y+t\xi\in E\}.\]
For $u:\Om\rw\R$, we define $u_{\xi,y}:\Om_{\xi,y}\rw\R$ as
\begin{equation*}
u_{\xi,y}(t):=u(y+t\xi), \quad t\in \Om_{\xi,y}.
\end{equation*}
Functions in $GSBV(\Om)$ can be characterized by one-dimensional slices (see \cite[Thm.~4.1]{Bra1})
\begin{theorem}\label{teo:braides}
 Let $u\in GSBV(\Om)$. Then for all $\xi \in\S^{d-1}$ we have 
 \[u_{\xi,y}\in GSBV(\Om_{\xi,y})\quad \text{for } \H^{d-1}\text{-a.e. }y\in\Pi_\xi.\]
Moreover for such $y$, we have 
\begin{gather*}
 u'_{\xi,y}(t)=\langle\nb u(y+t\xi),\xi\rangle\quad \text{for a.e. } t\in\Om_{\xi,y},\\
 J_{u_{\xi,y}}=\{t\in \R:y+t\xi\in J_u\},
\end{gather*}
and
\begin{equation*}
 u_{\xi,y}(t^\pm)=u^\pm(y+t\xi)\quad \text{or}\quad u_{\xi,y}(t^\pm)=u^\mp(y+t\xi)
\end{equation*}
according to whether $\langle\nu_u,\xi\rangle>0$ or  $\langle\nu_u,\xi\rangle<0$. Finally, for every Borel function $g:\Om\rw \R$,
\begin{equation}\label{eq:braidesricostruzione}
 \int_{\Pi_\xi}\sum_{t\in J_{u_{\xi,y}}} g_{\xi,y}(t)\dH^{d-1}(y)=\int_{J_u}g|\langle\nu_u,\xi\rangle|\dH^{d-1}.
 \end{equation}
Conversely if $u\in L^1(\Om)$ and if for all $\xi \in\{e_1,\dots,e_d\}$ and almost every $y\in\Pi_\xi$ we have $u_{\xi,y}\in SBV(\Om_{\xi,y})$ and 
\[\int_{\Pi_\xi}|Du_{\xi,y}|(\Om_{\xi,y})\dH^{d-1}(y)<+\oo\]
then $u\in SBV(\Om)$.
\end{theorem}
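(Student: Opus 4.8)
The plan is to treat the two implications separately, in both cases transferring the full $d$-dimensional structure of $u$ to the one-dimensional structure of its slices through a Fubini-type disintegration of $\R^d = \Pi_\xi \oplus \R\xi$. Since $GSBV$ is defined through truncations, I would first remove the ``generalized'' character by setting $u_T := \max(-T, \min(u,T)) \in SBV(\Om)$ and observing that truncation commutes with slicing, $(u_T)_{\xi,y} = (u_{\xi,y})_T$. It then suffices to prove the slicing identities for $u_T \in BV(\Om)$ and to let $T \to +\oo$: on the exhaustion $\{|u| < T\}$ one has $u_T = u$, so the recovered identities for the slices $(u_{\xi,y})_T$ yield at once that $u_{\xi,y} \in GSBV(\Om_{\xi,y})$ together with all the claimed formulas. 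This reduces the forward direction to the classical slicing of a $BV$ function.

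For $u \in BV(\Om)$ and a fixed $\xi \in \S^{d-1}$, the disintegration of Lebesgue measure along $\xi$ shows that for $\H^{d-1}$-a.e. $y \in \Pi_\xi$ the map $t \mapsto u(y + t\xi)$ lies in $BV(\Om_{\xi,y})$, and the directional chain rule gives the absolutely continuous identity $u'_{\xi,y}(t) = \langle \nb u(y + t\xi), \xi\rangle$ for a.e. $t$. For the jump part I would invoke the rectifiability of $J_u$ from Theorem~\ref{teo:federer}: covering $J_u$ up to an $\H^{d-1}$-null set by countably many $C^1$ hypersurfaces and slicing each piece by lines of direction $\xi$, the area formula shows that for $\H^{d-1}$-a.e. $y$ the line $y + \R\xi$ meets $J_u$ transversally exactly on $\{t : y + t\xi \in J_u\}$, and that at each crossing the one-sided traces of $u_{\xi,y}$ agree with $u^\pm$ with the sign prescribed by the sign of $\langle \nu_u, \xi\rangle$; this yields $J_{u_{\xi,y}} = \{t : y + t\xi \in J_u\}$. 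The reconstruction formula~\eqref{eq:braidesricostruzione} is then precisely the integral-geometric identity obtained by projecting the rectifiable set $J_u$ onto $\Pi_\xi$: the Jacobian of this projection is $|\langle \nu_u, \xi\rangle|$, so integrating the slice-sum of $g$ over $\Pi_\xi$ reproduces $\int_{J_u} g\, |\langle \nu_u, \xi\rangle| \dH^{d-1}$.

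For the converse I would run these identities backwards in the coordinate directions. From the hypothesis that each coordinate slice $u_{e_i,y}$ is $SBV$ with $\int_{\Pi_{e_i}} |D u_{e_i,y}|(\Om_{e_i,y}) \dH^{d-1}(y) < +\oo$, the one-dimensional Fubini identity for the total variation gives that every partial $D_i u$ is a finite Radon measure, hence $u \in BV(\Om)$. To upgrade $BV$ to $SBV$, I use that a one-dimensional $SBV$ function carries no Cantor part: once $u \in BV(\Om)$, the Cantor measure $D^c u$ can itself be sliced and its slices are the Cantor parts of the $u_{e_i,y}$, which vanish. As this holds in every coordinate direction, property~\eqref{BVproperty} forces $D^c u = 0$, i.e. $u \in SBV(\Om)$.

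The hard part will be the jump analysis in the forward direction together with the reconstruction formula~\eqref{eq:braidesricostruzione}. One must show that for $\H^{d-1}$-a.e. line the one-dimensional jump set of $u_{\xi,y}$ is exactly the trace of $J_u$ on that line---no jumps created, none lost---and that the cosine weight $|\langle \nu_u, \xi\rangle|$ appears with the correct Jacobian. This rests on the rectifiable structure of $J_u$, the area formula, and a careful control of the $\H^{d-1}$-negligible exceptional families (lines tangent to $J_u$, or meeting the unrectifiable remainder and the Cantor-type set), rather than on any single explicit computation.
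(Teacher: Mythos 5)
There is no in-paper proof to compare against: Theorem~\ref{teo:braides} is quoted as a known result, with the paper pointing to \cite[Thm.~4.1]{Bra1} (see also \cite[Thms.~3.107--3.108]{Am_Fu_Pal}), and is used as a black box in Step~3 of the proof of Theorem~\ref{teo:localResult}. Your outline reproduces the standard argument from exactly those references --- truncation to reduce $GSBV$ to $SBV$, Fubini/disintegration for the absolutely continuous part, rectifiability of $J_u$ plus the area formula for the jump traces and the cosine-weighted identity~\eqref{eq:braidesricostruzione}, and slicing in the coordinate directions for the converse --- so as a plan it is sound and is the ``same approach'' as the cited source. Two caveats are worth recording. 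First, in the truncation step the $\H^{d-1}$-null exceptional set of $y\in\Pi_\xi$ depends on $T$; you must fix a countable sequence $T_n\uparrow\infty$, collect a single null set, and then pass to the limit in~\eqref{eq:braidesricostruzione} by monotone convergence (note also that for $\H^{d-1}$-a.e.\ $y$ the line $y+\R\xi$ avoids the tangential part $\{x\in J_u:\langle\nu_u(x),\xi\rangle=0\}$ entirely, which is what makes the stated equality $J_{u_{\xi,y}}=\{t:y+t\xi\in J_u\}$, rather than an inclusion, correct). Second, your appeal to~\eqref{BVproperty} in the converse is a non sequitur: that property says $\H^{d-1}(E)=0\Rightarrow|Du|(E)=0$ and gives no handle on $D^cu$, which need not be concentrated on an $\H^{d-1}$-null set. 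What the argument actually requires is the (true but nontrivial) disintegration statement that for $u\in BV(\Om)$ the Cantor part of $\langle Du,e_i\rangle$ slices into the Cantor parts of the one-dimensional restrictions $Du_{e_i,y}$; these vanish by the $SBV$ hypothesis on the slices, whence $D^c\langle Du,e_i\rangle=0$ for every $i$ and $u\in SBV(\Om)$. With that substitution your sketch is a faithful rendering of the proof in the literature.
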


\subsection{Rectifiable vector Measures}
Let us introduce the linear operator $\perp$ that associates to each vector $v=(v_1,v_2)\in \R^2$ the vector $v^\perp=(-v_2,v_1)$ obtained via a $90^\circ$ counterclockwise rotation of $v$. Notice that the $\perp$ operator maps divergence free $\R^2$-valued measures onto curl free $\R^2$-valued measures. Let $O\subset\R^2$ be a simply connected and bounded  open set. By Stokes Theorem, for any divergence free measure $\s\in\M(O,\R^2)$ there exists a function $u\in BV(\Om)$ with zero mean value such that $\s= Du^\perp$. On the other hand for $u\in \P(\Om)$ $\sigma:=Du^\perp$ is divergence free and by Theorem~\ref{teo:federer}, $\sigma =(u^+-u^-)\nu_u^\perp \H^1=U(J_u,[u],\nu_u^\perp)$.\smallskip

Let us now produce an elementary example of measure $\g$ of the form $U(M,\t,\xi)$.
\begin{example}\label{ex:div2punti}
   Given two points $x,y\in\Om$ we consider the smooth path from $x$ to $y$ defined as
  \[r(t):=x+t\frac{y-x}{|x-y|}\quad \mbox{ for } t\in [0,|x-y|].\]
  We define the measure $\g\in \M(\Om,\R^2)$  by
  \[
  (\phi,\g)\,:=\,\int\langle\phi(r(t)),\dot{r}(t)\rangle\dt\qquad\mbox{for any } \phi\in \C(\overline{\Om},\R^2).
  \]
We then have $\g=U([x,y],1,\xi)$ with $\xi=\frac{y-x}{|y-x|}$. Notice that $\nb\cdot\g=\d_{x}-\d_y$.\\
Similarly we obtain a measure satisfying this property by substituting for $r$ any Lipschitz path from $x$ to $y$.
\end{example}
We will make use of the following construction.
\begin{lemma}\label{lemma:completediv}
 Given $S=(x_0,\cdots,x_N)\in \Om^{N+1}$ a sequence of $N+1$ distinct points, there exist a vector measure $\g= U(M_\g,\t_\g,\xi_\g)$ and a finite partition  $(\Om_i)\subset \A(\Om)$ of $\Om$ such that 
   \begin{enumerate}[\quad\quad a)]
   	\item $\nb\cdot\g=-N\d_{x_0}+\sum_{i=1}^{N} \d_{x_i}$,
   	\item $\t_\g:M_\g\rw\{1,N\}$, 
	\item each $\Om_i$ is a polyhedron,
   	\item $ M_\g\subset \bigcup_i \de\Om_i$,
   	\item $\Om_i$ is of finite perimeter for each $i$ and $\Om_i\cap\Om_j=\emptyset$ for $i\neq j$, 
   	\item $\L^2(\Om\sm\cup_i\Om_i)=0$.
\end{enumerate}	
Moreover if $M$ is a $1$ dimensional countably rectifiable set, we can choose $\gamma$ and $(\Om_i)$ such that  $\H^1(M\cap \bigcup_i \de\Om_i)=0$.

 \end{lemma}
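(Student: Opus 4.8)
The plan is to construct $\gamma$ and the partition $(\Om_i)$ explicitly by connecting the points with straight segments and then ``cutting'' the domain along these segments. First I would build the network carrying the mass. The natural choice is to send mass $N$ along a path from $x_0$ and distribute unit mass to each $x_i$; concretely, pick $N$ simple polygonal (piecewise linear) paths $r_i$ joining $x_0$ to $x_i$ inside $\Om$, each avoiding the other source/sink points, and set $\gamma_i$ to be the measure associated to $r_i$ as in Example~\ref{ex:div2punti}, oriented from $x_i$ to $x_0$. Summing, $\gamma := \sum_{i=1}^N \gamma_i$ satisfies $\nb\cdot\gamma = -N\d_{x_0}+\sum_{i=1}^N \d_{x_i}$, which is property (a). Since each $\gamma_i$ is supported on a polygonal path, the union $M_\gamma := \bigcup_i \supp\gamma_i$ is a finite union of segments, hence $1$-dimensional rectifiable, and the multiplicity $\t_\gamma$ is an integer between $1$ and $N$ (it equals the number of paths overlapping a given segment). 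To guarantee property (b), namely $\t_\gamma\in\{1,N\}$, I would choose the paths so that they share a common trunk emanating from $x_0$ and only branch out near the sinks, or alternatively arrange that overlaps happen only on a single shared segment of multiplicity $N$; a cleaner route is to route all $N$ paths along a common ``spine'' that splits into $N$ disjoint branches, so each segment carries either $1$ or $N$ units.

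Next I would produce the partition. The idea is that $M_\gamma$ together with $\de\Om$ and a few auxiliary cuts divides $\Om$ into finitely many polygonal pieces. Extend $M_\gamma$ to a finite planar graph $G$ (adding segments from the branch points to $\de\Om$ if needed so that the complement of $G$ in $\Om$ has no ``dangling'' edges), and let $\{\Om_i\}$ be the connected components of $\Om\sm G$. Because $G$ is a finite union of segments, each $\Om_i$ is an open polyhedron (property (c)), the $\Om_i$ are pairwise disjoint of finite perimeter with $\L^2(\Om\sm\bigcup_i\Om_i)=0$ (properties (e) and (f), the last holding since $G$ is $\L^2$-null), and by construction $M_\gamma\subset G\subset\bigcup_i\de\Om_i$ (property (d)). The relation $\sigma=Du^\perp$ from the preceding discussion can be invoked to confirm compatibility of the orientations, though for this lemma only the geometric partition is needed.

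For the final clause, given a fixed $1$-dimensional rectifiable set $M$, I need $\H^1(M\cap\bigcup_i\de\Om_i)=0$. The key observation is that there is freedom in choosing the directions and positions of the polygonal paths: I may translate and rotate each segment slightly without destroying properties (a)--(f). I would argue that the set of segment placements for which $\H^1(M\cap\text{segment})>0$ is negligible. Precisely, for a fixed rectifiable $M$ with $\H^1(M)<\infty$ and a fixed line direction, the slicing/Fubini-type argument (or the fact that $M$ meets $\H^1$-almost no line in a set of positive length) shows that for almost every translate the intersection has zero $\H^1$-measure; since the graph $G$ is a finite union of segments, I can perturb each one into general position so that every edge meets $M$ in an $\H^1$-null set, whence $\H^1(M\cap G)=0$.

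The main obstacle I anticipate is the genericity argument for the last clause: making rigorous that almost-every perturbation of a segment meets a given rectifiable $M$ in $\H^1$-measure zero, while simultaneously preserving the combinatorial structure of the partition and the multiplicity constraint $\t_\gamma\in\{1,N\}$. One must check that perturbing the paths does not create new crossings that would raise a multiplicity above $N$ or introduce a multiplicity other than $1$ or $N$; keeping the shared-spine topology intact under small perturbations, and invoking a Fubini argument on the family of admissible placements to find one avoiding $M$ in the $\H^1$ sense, is where the real care is required.
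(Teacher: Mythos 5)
Your proposal is correct and follows essentially the same route as the paper: a sum of Example~\ref{ex:div2punti} path measures with a common multiplicity-$N$ trunk at $x_0$ and unit branches to the $x_i$, a partition given by the complement of a finite union of segments/lines, and a genericity (coarea/Fubini) argument for the final clause. The paper's instantiation is just leaner: all segments pass through a single auxiliary point $p$, the partition is cut along the full supporting lines (which avoids your ``dangling edge'' repairs), and the one free parameter $p$ simultaneously guarantees (b) and, for a.e.\ choice of $p$, $\H^1(M\cap\bigcup_i\de\Om_i)=0$ --- note that since the endpoints $x_i$ are fixed you cannot literally translate those segments, only move the free branch point, which is exactly what varying $p$ does.
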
	
\begin{proof}
Let us fix a point $p\in\Om\setminus S$. By Example~\ref{ex:div2punti} we can construct a measure $\g_i$ with $\nb\cdot\g_i=\d_{x_i}-\d_{p}$ for $i\in\{0,\cdots,N\}$. We define \[\g=-N \g_0 +\sum_{i=1}^N\g_i.\]
By construction $(a)$ holds true. Moreover, up to a small displacement of $p$ we may assume that $[p,x_i]\cap[p,x_j]=\{p\}$ for $i\neq j$ so that $(b)$ holds. \\
Next, let $D_j$ be the straight line supporting $[p,x_j]$. We define the sets $(\Om_i)$ as the connected components of
$\Om\setminus \left(D_0 \cup\cdots\cup D_N\right)$. We see that $(c,d,e,f)$ hold true.\\
 For the last statement, we observe that  by the coarea formula,  we have $\H^1(M\cap \bigcup_i \de\Om_i)=0$ for a.e. choice of $p$. 
\end{proof}
\begin{figure}[h!]
\centering
  \includegraphics[width=.4\textwidth]{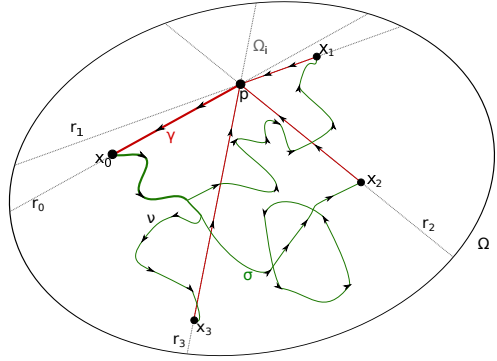} 
  \caption{Example of the construction of the $\H^1$-rectifiable measure $\g$ (red) and of the partition $\{\Om_i\}$ (gray) in the case $\s$ (green) is being an $\H^1$-rectifiable vector measure.}\label{fig:partizione}
\end{figure}

\section{Local Result}\label{sec:LocRes}
In this section we introduce  a localization of the family of functionals $(\F_\eps)$ (see~\eqref{eq:functional}). We establish  a lower bound and a compactness property for these local energies.

\begin{localization}\label{procedure}
Let $O\in A_S(\Om)$, for $u\eps\in H^1(O)$ and $\phi_\eps\in H^1(O)$, we define 
\[
\LF_\eps(u_\eps,\phi_\eps;O)\ :=\ \F_\eps(\nb u_\eps,\phi_\eps;O).
\]
Notice that for $\eps<d(O,S)$, we have  $\nb\cdot \s_\eps\equiv 0$  in $O$ for any $\s_\eps\in V_\eps(\Om)$. By the Stokes theorem we have $Du_\eps=\s_\eps^\perp$ for some  $u\in H^1(O)$ and we have
\begin{equation*}
\F_\eps(\s_\eps,\phi_\eps;O)=\LF_\eps(u_\eps,\phi_\eps;O).
\end{equation*}
\end{localization}
The remaining of the section is devoted to the proof of 
\begin{theorem}\label{teo:localResult}
Let $(u_\eps)_{\eps\in(0,1]}\subset H^1(O)$ be a family of functions with zero mean value and let $(\phi_\eps)\subset H^1(O)$ such that $\phi_\eps\in H^1(O,[\eta(\eps),1])$. Assume  that $c_0:=\sup_\eps \LF_\eps(u_\eps,\phi_\eps;O)$ is finite,  then there exist a subsequence $\eps_j$ and a function $u\in BV(\Om)$ such that
\begin{enumerate}[a)]
 \item $\phi_{\eps_j} \rw 1$ in $L^2(O)$,
 \item $u_{\eps_j}\rw u$ with respect to the weak-$*$ convergence in $BV$,
 \item $u\in \P(O)$.
\end{enumerate}
Furthermore for any $u\in\P(O)$ and any sequence $(u_\eps,\phi_\eps)$ as above such that $u_\eps\rwstar u$, we have the following lower bound of the energy:
\[\liminf_{\eps\rw0}\LF_\eps(u_\eps,\phi_\eps;O)\geq \int_{J_u\cap O}\lt[1+\alpha|[u]|\rt]\dH^{d-1}.\]
\end{theorem}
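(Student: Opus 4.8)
The plan is to prove the compactness statements (a)--(c) and the energy lower bound separately, reducing the two-dimensional lower bound to a one-dimensional profile estimate by slicing.

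For (a), I would read off $\phi_{\eps}\to 1$ in $L^2(O)$ directly from the Modica--Mortola potential, since $\int_O (1-\phi_\eps)^2\,dx \le 2\eps\, c_0 \to 0$. For the compactness of $u_\eps$ I would split $O$ into the ``bad'' set $A_\eps := \{\phi_\eps \le 1/2\}$ and its complement. The potential term forces $|A_\eps| \le 8\eps c_0 \to 0$, while on $O\sm A_\eps$ the constraint term gives $\int_{O\sm A_\eps}|\nb u_\eps|^2 \le 8\eps c_0$, hence $\int_{O\sm A_\eps}|\nb u_\eps|\,dx \to 0$ by Cauchy--Schwarz. On $A_\eps$ I would estimate $\int_{A_\eps}|\nb u_\eps| \le \lt(\int_{A_\eps}\phi_\eps^{-2}\rt)^{1/2}\lt(\int_{A_\eps}\phi_\eps^2|\nb u_\eps|^2\rt)^{1/2} \le (|A_\eps|\eta^{-2})^{1/2}(2\eps c_0)^{1/2} \le 4 c_0\,\eps/\eta$, which stays bounded because $\eta/\eps \to \alpha$. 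Together with the zero-mean normalization and the Poincaré inequality in $BV$, this yields a uniform bound on $|Du_\eps|(O)$ and, by compact embedding, a subsequence with $u_{\eps_j}\to u$ in $L^1$ and weak-$*$ in $BV$, giving (b). For (c), the same two estimates show that the restriction of $\nb u_\eps$ to $O\sm A_\eps$ tends to $0$ in $L^1$; combined with the one-dimensional slice analysis below (each limiting slice is piecewise constant, with neither Cantor nor diffuse part) and the slicing characterization of Theorem~\ref{teo:braides}, I conclude $u\in\P(O)$.

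For the lower bound I would fix a direction $\xi\in\S^1$ and slice. Using $|\nb u_\eps|^2\ge ((u_\eps)_{\xi,y}')^2$, $|\nb\phi_\eps|^2 \ge ((\phi_\eps)_{\xi,y}')^2$ and Fubini, $\LF_\eps$ is bounded below by the integral over $\Pi_\xi$ of the one-dimensional energies along the slices. The heart of the matter is the one-dimensional estimate: near a point where the limiting slice jumps by $h=|[u]|$, the field $\phi$ must dip from $1$ down to a small value and back while $u$ performs its jump, and I would capture the two contributions on disjoint regions. Fix a threshold $\d$. On $\{\phi_\eps\ge\d\}$ I use $\tfrac\eps2(\phi_\eps')^2+\tfrac{(1-\phi_\eps)^2}{2\eps}\ge (1-\phi_\eps)|\phi_\eps'| = |(\Phi(\phi_\eps))'|$ with $\Phi(s)=s-s^2/2$, so each full dip contributes at least $2\int_\d^1(1-s)\,ds=(1-\d)^2$, producing the ``$1$''. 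On $\{\phi_\eps\le\d\}$ I use the constraint and potential terms, $\tfrac{\phi_\eps^2(u_\eps')^2}{2\eps}+\tfrac{(1-\phi_\eps)^2}{2\eps}\ge \tfrac{1-\d}{\eps}\phi_\eps|u_\eps'|\ge \tfrac{(1-\d)\eta}{\eps}|u_\eps'|$, and, since the jump of $u$ is recovered exactly where $\phi_\eps$ is small, $\int_{\{\phi_\eps\le\d\}}|u_\eps'|\to h$; with $\eta/\eps\to\alpha$ this produces the ``$\alpha h$''. Adding the two disjoint contributions and letting first $\eps\to0$ and then $\d\to0$ gives $1+\alpha|[u]|$ per jump on each slice.

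Finally I would integrate the one-dimensional bound over $y\in\Pi_\xi$ and apply the reconstruction formula~\eqref{eq:braidesricostruzione} with $g=1+\alpha|[u]|$ to obtain, for each fixed $\xi$, the estimate $\liminf_\eps \LF_\eps \ge \int_{J_u\cap O}(1+\alpha|[u]|)\,|\langle\nu_u,\xi\rangle|\,\dH^{1}$. To remove the factor $|\langle\nu_u,\xi\rangle|$ I would localize the energy as a family of measures, apply the slice bound on small balls for directions $\xi$ in a countable dense subset of $\S^1$, and take the supremum; since $\sup_\xi|\langle\nu_u,\xi\rangle|=1$, the standard supremum-of-measures argument upgrades the bound to $\int_{J_u\cap O}(1+\alpha|[u]|)\,\dH^1$. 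I expect the main obstacle to be the one-dimensional estimate, specifically obtaining the \emph{additive} value $1+\alpha h$ rather than the weaker $\sqrt{1+(\alpha h)^2}$ that a single global Young inequality would yield: this is precisely what forces the spatial separation into $\{\phi\ge\d\}$ and $\{\phi\le\d\}$, together with the argument that essentially all of the jump of $u$ concentrates where $\phi$ is small. A secondary technical point is the passage from a fixed slicing direction to the full $\H^1\restr J_u$ bound through the localization and supremum-of-measures method.
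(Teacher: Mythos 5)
Your proposal is correct and follows the same overall architecture as the paper's proof: item \emph{(a)} from the potential term; item \emph{(b)} from a Cauchy--Schwarz estimate split over $\{\phi_\eps<1/2\}$ and its complement, using $\phi_\eps\geq\eta$ and $\eta/\eps\to\alpha$; the lower bound by one-dimensional slicing; and finally the reconstruction formula~\eqref{eq:braidesricostruzione} combined with the supremum-of-measures lemma over a countable dense set of directions to remove the factor $|\langle\nu_u,\xi\rangle|$. The one place where you genuinely diverge is the one-dimensional estimate of the $\alpha|[u]|$ term. The paper locates points $\tilde{x}^1_\eps<\tilde{x}^2_\eps$ at which $u_\eps$ has essentially completed its jump, writes the infimum $h_\eps(z)$ of the constraint-plus-potential energy over admissible profiles on $(\tilde{x}^1_\eps,\tilde{x}^2_\eps)$ via a Cauchy--Schwarz duality, and then optimizes explicitly over a scalar parameter. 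You instead apply the pointwise Young inequality $\tfrac{1}{2\eps}\phi_\eps^2(u_\eps')^2+\tfrac{1}{2\eps}(1-\phi_\eps)^2\geq\tfrac{(1-\delta)\eta}{\eps}|u_\eps'|$ on the sublevel set $\{\phi_\eps\leq\delta\}$ and argue that the full variation of $u_\eps$ concentrates there, since $\int_{I\cap\{\phi_\eps>\delta\}}|u_\eps'|\to0$ by Cauchy--Schwarz while $\liminf_\eps\int_I|u_\eps'|\geq|Du|(I)$ by lower semicontinuity. Both routes yield $(1-\delta)\,\alpha\,|[u]|$ before sending $\delta\to0$; yours is somewhat more elementary and avoids the explicit minimization, at the modest price of justifying the concentration claim (which your two estimates do provide). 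Your identification of the key point --- that the additive $1+\alpha h$, rather than the $\sqrt{1+(\alpha h)^2}$ a single global Young inequality would give, is obtained by spatially separating the Modica--Mortola dip from the constraint contribution --- is exactly the mechanism of the paper's Step 2.2.
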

The proof is achieved in several steps and mostly follows ideas from \cite{Iur}
(see also~\cite{ContiFocardiIurlano}).
In the first step we obtain~\textit{(a)} and~\textit{(b)}. In steps 2. we prove~\textit{(c)} and the lower bound for one dimensional slices. Finally in step 3. we prove~\textit{(c)} and the lower bound in dimension $d$. The construction of a recovery sequence that would complete the $\Gamma$-limit analysis is postponed to the global model in Section~\ref{sec:UppBound}.
\begin{proof}
 \textit{Step 1.} Item \textit{(a)} is a straightforward consequence of the definition of the functional. Indeed, we have 
 \begin{equation*}
\int_O(1-\phi_\eps)^2\, \dx\, \leq \eps\, \LF_\eps(\s_\eps,\phi_\eps)\, \leq\, c_0\, \eps\,  \stackrel{\eps\dw0}\longrightarrow \ 0.
\end{equation*}
For~\textit{(b)}, since $(u_\eps)$ has zero mean value, we only need to show that $ \sup_\eps\{|Du_\eps|(O): k\in\N\}<+\infty$. Using Cauchy-Schwarz inequality we get
\begin{equation} 
\label{eq:compactness1}
\lt[|Du_\eps|(O)\rt]^2=\lt(\int_{O}|\nb u_\eps|\rt)^2\leq \lt(\eps\int_{O}\frac{1}{\phi^2\e}\rt)\lt(\frac{1}{\eps}\int_{O}\phi^2\e|\nb u_\eps|^2\rt).
\end{equation}
By assumption, the second therm in the right hand side of \eqref{eq:compactness1} is bounded by $2c_0$. In order to estimate the first term we split $O$ in the two sets $\{\phi_\eps<1/2\}$ and $\{\phi_\eps\geq1/2\}$. We have,\begin{equation*}
\eps\int_{O}\frac{1}{\phi\e}\,=\, \eps\int_{\{\phi_\eps<1/2\}}\frac{1}{\phi^2\e}+\eps\int_{\{\phi\e\geq1/2\}}\frac{1}{\phi^2\e}.
\end{equation*}
Since $\phi_\eps\geq \eta$ and $(1-t)^2$ is decreasing in the interval $O$ the following inequalities hold
\begin{eqnarray*}
\int_{\{\phi\e<1/2\}}\frac{1}{\phi^2\e}&\leq& \frac{2 \eps}{\eta^2(1-1/2)^2}\int_{O}\frac{(1-\phi\e)^2}{2\eps}\,\leq\,\frac {8\eps}\eta^2 c_0,\\
\nonumber
\int_{\{\phi_\eps\geq1/2\}}\frac{1}{\phi^2\e}&\leq &\int_{O}\frac{1}{(1/2)^2} \, =\, 4|O|.
\end{eqnarray*}
Combining these estimates with~\eqref{eq:compactness1} we obtain
\begin{equation}
\label{eq:limitatezzagradu}
\lt[|Du_\eps|(O)\rt]^2\, \leq\, \frac{\eps^2}{\eta^2}\:16c_0^2 + 8\eps |O| c_0\,\stackrel{\eps\dw0}\longrightarrow\,  \frac{16c_0^2}{\a^2}\,<\,\infty.
\end{equation}
This establishes~\textit{(b)}.\medskip

\textit{Step 2.} In this step we suppose $O$ to be an interval of $\R$. We first prove that $u$ is piecewise constant.  The idea is that in view of the constraint component of the energy, variations of $u_\eps$ are balanced by low values of $\phi_\eps$. On the other hand the Modica-Mortola component of the energy implies that $\phi_\eps\simeq 1$ in most of the domain and that transitions from $\phi_\eps\simeq 1$ to $\phi_\eps\simeq 0$ have a constant positive cost (and therefore can occur only finitely many times).\medskip

\textit{Step 2.1. proof of $u\in \P(O)$.} Let us define 
\begin{equation}\label{def:insiemi}
B_\eps := \lt\{x\in O:\phi_\eps(x)<\frac{3}{4}\rt\}\,\supset\,  A_\eps := \lt\{x\in O:\phi_\eps(x)<\frac{1}{2}\rt\},
\end{equation}
and let
\begin{equation}\label{def:C}
C_\eps = \{I \text{ connected component of } B_\eps : I\cap A_\eps\neq \varnothing \}.
\end{equation}
Let us show that the cardinality of $C_\eps$ is bounded by a constant independent of $\eps$. Let $\eps$ be fixed and consider an interval $I\in C_\eps$. Let $a,b\in \bar I$ such that $\{\phi_\eps(a),\phi_\eps(b)\}=\{1/2,3/4\}$. Using the usual Modica-Mortola trick, we have 
\begin{equation*}
 \LF_\eps(u_\eps,\phi_\eps;I)\,\geq\,\int_I \eps|\phi'_\eps|^2  + \frac{(1-\phi_\eps)^2}{4\eps} \dx \geq \int_{(a,b)} |\phi'_\eps|(1-\phi_\eps)\dx\,\geq\, \int_{1/2}^{3/4} (1-t)\,dt \, =\, \dfrac3{2^5}.
\end{equation*}
Since all the elements of $C_\eps$ are disjoint, we deduce from the energy bound that  
\[
\# C_\eps\leq2^5c_0/3,
\]
where we note $\# C_\eps$ the cardinality of $C_\eps$. Next, up to extraction we can assume that  $\# C_\eps=N$ is fixed. The elements of $ C_\eps$ are written on the form $I^\eps_i=(m^\eps_i-w_i^\eps,m^\eps_i+w_i^\eps)$ for $i=1,\cdots,N$ with $m^\eps_i<m^\eps_{i+1}$. Since $\phi_\eps\rw1$ in $L^1(O)$ we have
\begin{equation}\label{misura:zero}
\sum_{I^\eps_i\in C_\eps}|I^\eps_i|= \sum_i2w^\eps_i \rw 0.
\end{equation}
Up to further extraction, we can assume that each sequence $(m_i^\eps)$ converges in $\ov O$. We  call $m_1\leq m_2 \leq \dots \leq m_N$ their limits. We now prove that $|D u|(O\setminus \{m_i\}_{i=1}^N) =0$. For this, we fix  $x\in O\setminus\{m_i\}_{i=0}^N$ and establish the existence of a neighborhood $B_\delta(x)$ of $x$ for which $|Du|(B_\delta(x))=0$. Let $0<\delta\leq 1/2\min_i|x-{m}_i|$. Equation~\eqref{misura:zero} ensures that for $\eps$ small enough $B_\delta(x)\cap C_\eps=\varnothing$. 
Notice that from the definitions in~\eqref{def:insiemi} and~\eqref{def:C} we have that $\phi_\eps\geq 1/2$ outside $C_\eps$. Hence, using Cauchy-Scwarz inequality, we have for $\eps$ small enough, 
\begin{equation*}
\lt(\int_{B_\delta(x)}| u'_\eps|\dx\rt)^2\leq 2\delta \int_{B_\delta(x)}| u'_\eps|^2\dx\leq (2\delta)(2\eps) 4\lt(\frac{1}{2\eps}\int_{B_\delta(x)}\phi_\eps|^2 u'_\eps|^2\dx\rt)\, \leq\, 16c_0\eps\delta \,\stackrel{\eps\dw0}\longrightarrow\,0.
\end{equation*}
By lower semicontinuity of the total variation on open sets we conclude that $|Du|(B_\delta(x))=0$. This establishes $u\in \P(O)$ with $J_u\subset \{m_1,\cdots,m_N\}$.\medskip

\textit{Step 2.2. Proof of the lower bound.} Without loss of generality, we prove the lower bound in the case $J_u=\{0\}$ and $ D:=u(0^+)=-u(0^-)>0$. Using the same argument as in~\cite[Pag.~7]{Iur} for any $0<d<D$ there exist six points 
$y_1<x^1_\eps\leq\tilde{x}^1_\eps<\tilde{x}^2_\eps\leq x^2_\eps<y_2$ such that 
\begin{gather*}
\lim_{\eps \rw 0}\phi_\eps(y_1)= \lim_{\eps \rw 0} \phi_\eps(y_2) = 1,\\
\lim_{\eps \rw 0}\phi_\eps(x^1_\eps)= \lim_{\eps \rw 0} \phi_\eps(x^2_\eps) =0,\\
u_\eps(\tilde{x}^1_\eps)= -D+d,\quad\quad \; u_\eps(\tilde{x}^2_\eps)= D-d.
\end{gather*}
Using the Modica-Mortola trick in the intervals $(y_1,x^1_\eps)$ and $(x^2_\eps,y_2)$ as above, we compute:
\begin{equation}\label{dis1:liminf} 
\liminf_{\eps \dw 0}\LF_\eps(u_\eps,\phi_\eps;(y_1,x^1_\eps)\cup(x^2_\eps,y_2))\geq\liminf_{\eps \dw 0}\int^{x^1_\eps}_{y_1}(1-\phi_\eps)|\phi'_\eps| \dx+\int_{x^2_\eps}^{y_2}(1-\phi_\eps)|\phi'_\eps| \dx\,\geq\,1.
\end{equation}
For the estimation on the interval $I_\eps=(\txk^1,\txk^2)$ let us introduce:
\begin{gather*}
G_\eps := \lt\{ w\in H^1(I_\eps) : w(\txk^1)=-D+d,\, w(\txk^2)=D-d \rt\},\\
Z_\eps := \lt\{ z\in H^1(I_\eps) :\eta\leq z\leq1 \text{ a.e. on } I_\eps \rt\},\\
H_\eps(w,z):=\int_{I_\eps}\lt(\frac{1}{2\eps}z^2|w'|^2+\frac{(1-z)^2}{2\eps}\rt)\dx,\\
h_\eps(z)= \inf_{w\in W_\eps}H_\eps(w,z) \text{ for } z\in Z_\eps.
\end{gather*}
Note that by Cauchy-Schwarz inequality, we have for $w\in G_\eps$ and $z\in Z_\eps$,
\[
\int_{I_\eps}z^2|w'|^2\, \geq \, \left(\int_{I_\eps}|w'|\dx\right)^2\lt(\int_{I_\eps}\frac{1}{z^2}\rt)^{-1}\, \geq\, 4(D-d)^2\lt(\int_{I_\eps}\frac{1}{z^2}\rt)^{-1}.
\]
We deduce the lower bound
\begin{equation}\label{eq:minimo}
h_\eps(z) \, \geq  \, 4(D-d)^2 \lt(2\eps\int_{I_\eps}\frac{1}{z^2}\dx\rt)^{-1}+ \int_{I_\eps}\lt(\frac{(1-z)^2}{2\eps}\rt)\dx.
\end{equation}
Let us remark that optimizing $H_\eps(w,z)$ with respect to $w\in G_\eps$ we see that this inequality is actually an equality.\\
Consider for $0 <\lambda<1$ the inequalities:
\begin{equation*}
\int_{\{x\in I_\eps: \phi_\eps\geq \lambda\}}\frac{1}{\phi^2_\eps}\leq \frac{\L^1(I_\eps)}{\lambda^2}\qquad\mbox{ and }\qquad\int_{\{x\in I_\eps: \phi_\eps< \lambda\}}\frac{1}{\phi^2_\eps}\leq \frac{1}{(1-\lambda)^2}\frac{2\eps}{\eta^2}\lt(\int_{I_\eps}\frac{(1-\phi_\eps)^2}{2\eps}\dx\rt).
\end{equation*}
Applying both of them in~\eqref{eq:minimo} we obtain
\begin{align}
\nonumber
\LF_\eps(u_\eps,\phi_\eps,I_\eps)&\geq  h_\eps(\phi_\eps)\\ \nonumber
&\geq\frac{2(D-d)^2}{\frac{\eps\L^1(I_\eps)}{\lambda^2}+\frac{1}{(1-\lambda)^2}\frac{2\eps^2}{\eta^2}\lt(\int_{I\e}\frac{(1-\phi\e)^2}{2\eps}\dx\rt)}+ \int_{I\e}\lt(\frac{(1-\phi\e)^2}{2\eps}\rt)\dx \\
&\geq 2(1-\lambda)\frac{\eta}{\eps}(D-d)-(1-\lambda)^2\frac{\eta^2}{2\eps}\frac{\L^1(I\e)}{\lambda^2} \label{dis2:liminf}
\end{align}
where the latter is obtained by minimizing the function:
\[
t \mapsto \frac{2(D-d)^2}{\frac{\eps\L^1(I_\eps)}{\lambda^2}+\frac{1}{(1-\lambda)^2}\frac{2\eps^2}{\eta^2}t}+ t.
\]
Therefore we can pass to the limit in~\eqref{dis2:liminf} and obtain:
\begin{equation*}
 \liminf_{\eps\dw0}\LF_\eps(u_\eps,\phi_\eps,I_\eps)\geq(1-\lambda)\a\,2(D-d).
\end{equation*}
Sending $\lambda$ and $d$ to $0$ and recalling the estimation in~\eqref{dis1:liminf} we get
\begin{equation}\label{eq:liminf1d}
\liminf_{\eps \dw 0}\LF_\eps(u_\eps,\phi_\eps,(y_1,y_2))\, \geq\, 1+\a\,2D \, =\, 1+\a|u(0^+)-u(0^-)|.
\end{equation}
\textit{Step 3.} Using Fubini's decomposition we can rewrite the energy obtaining for every $\xi\in \S^{d-1}$, 
\begin{equation*}
 \LF_\eps(u_\eps,\phi_\eps;O)\,\geq\,\int_{\Pi_\xi}\lt( \int_{O^\xi_y}\frac{1}{2\eps}(\phi^2_\eps)^\xi_y |(u'_\eps)^\xi_y|^2  + \frac{\eps}{2}|(\phi'_\eps)^\xi_y |^2  + \frac{(1-(\phi_\eps)^\xi_y )^2}{2\eps} \dt\rt)\dH^{d-1}(y).
\end{equation*}
Since $\LF_\eps(u_\eps,\phi_\eps;O)$ is bounded, for $\H^{d-1}$ almost every $y\in \Pi_\xi$, the inner integral in the latter is also bounded, furthermore it corresponds to the functional on the one dimensional slice $O^\xi_y$ studied in the previous step evaluated on the couple $((u_\eps)^\xi_y,(\phi_\eps)^\xi_y)$. Taking the $\liminf$ of the above quantity, using~\eqref{eq:liminf1d}, we get by Fatou's lemma that for any $\xi\in \S^{d-1}$ and $\H^{d-1}$ almost every $y\in \Om_\xi$
\begin{equation*}
\int_{\Pi_\xi}\sum_{m_i\in(J_{u})^\xi_y}\lt[1+\a|u^\xi_y(m_i^+)-u^\xi_y(m_i^-)| \rt]\mathrm{d}\H^{d-1}(y)\leq \liminf_{\eps\dw0} \LF_\eps(u_\eps,\phi_\eps;O).
\end{equation*}
Therefore in force of Theorem~\ref{teo:braides} we have $u\in SBV(O)$. Moreover, since $(u')^\xi_y=0$ on each slice, we have  $u\in \P(O)$. Applying identity~\eqref{eq:braidesricostruzione} we get 
\begin{equation}\label{eq:liminfxi} 
\liminf_{\eps\rw0}\LF_\eps(u_\eps, \phi_\eps;O) \geq \int_{J_u\cap O}|\nu_u\cdot \xi|\lt[1+\a|[u]|\rt]\dH^{d-1}.
\end{equation}
In order to conclude, we use the following localization method stated by Braides in~\cite[Prop.~1.16]{Bra1}.
\begin{lemma}\label{lem:braides}
Let $\mu:\A(X)\rw[0,+\oo)$ be a superadditive set function and let $\lambda$ be a positive measure on $X$. For any $i\in\N$ let $\psi_i$ be a Borel function on $X$ such that $\mu(A)\geq\int_A \psi_i\dif \lambda$ for all $A\in\A(X)$. Then 
\[\mu(A)\geq\int_A\psi\dif\lambda\]
where $\psi:=\sup_i\psi_i$.
\end{lemma}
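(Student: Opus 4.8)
The plan is to establish the pointwise bound $\mu(A)\geq\int_A\psi\dif\lambda$ by first reducing the countable supremum $\psi=\sup_i\psi_i$ to a finite maximum, and then gluing the finitely many hypotheses $\mu(U)\geq\int_U\psi_i\dif\lambda$ together through an open covering adapted to $\lambda$. Throughout I use superadditivity in the form $\mu(A)\geq\sum_{i}\mu(U_i)$ for any finite family of pairwise disjoint open sets $U_i\subset A$ (the form satisfied by a $\liminf$ of additive energies, as in the application), and I assume, as in the intended use where $\psi_i=|\nu_u\cdot\xi_i|(1+\a|[u]|)$, that the $\psi_i$ are nonnegative and that $\lambda$ is a Radon, hence inner-regular, measure.

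First I would set $\psi^{(n)}:=\max_{i\leq n}\psi_i$. Since $\psi^{(n)}\up\psi$ pointwise and all functions are nonnegative, monotone convergence gives $\int_A\psi^{(n)}\dif\lambda\rw\int_A\psi\dif\lambda$, so it suffices to prove $\mu(A)\geq\int_A\psi^{(n)}\dif\lambda$ for each fixed $n$. To handle the finite maximum, I partition the open set $A$ into the Borel sets $E_i:=\{x\in A:\psi_i(x)\geq\psi_j(x)\ \forall j\leq n\text{ and }\psi_i(x)>\psi_j(x)\ \forall j<i\}$, on which $\psi^{(n)}=\psi_i$, so that $\int_A\psi^{(n)}\dif\lambda=\sum_{i=1}^n\int_{E_i}\psi_i\dif\lambda$.

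The central step is to replace this Borel partition by disjoint open sets, which is where inner regularity enters. Fixing $\eps>0$, I would use inner regularity of $\lambda$ to choose pairwise disjoint compacts $K_i\subset E_i$ with $\int_{E_i}\psi_i\dif\lambda\leq\int_{K_i}\psi_i\dif\lambda+\eps/n$. Being finitely many pairwise disjoint compacts contained in the open set $A$, the $K_i$ have positive mutual distances and positive distance to $X\sm A$; choosing $\delta>0$ below all these quantities, the open $\delta/2$-neighbourhoods $U_i$ of the $K_i$ are pairwise disjoint and satisfy $K_i\subset U_i\subset A$. Superadditivity together with the hypothesis then yields
\[
\mu(A)\ \geq\ \sum_{i=1}^n\mu(U_i)\ \geq\ \sum_{i=1}^n\int_{U_i}\psi_i\dif\lambda\ \geq\ \sum_{i=1}^n\int_{K_i}\psi_i\dif\lambda\ \geq\ \int_A\psi^{(n)}\dif\lambda-\eps,
\]
where the third inequality uses $K_i\subset U_i$ and $\psi_i\geq0$. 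Letting $\eps\dw0$ and then $n\rw\oo$ completes the argument.

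The main obstacle is precisely this passage from the Borel partition $\{E_i\}$ to a disjoint open family: superadditivity only provides information on disjoint open subsets of $A$, whereas the region where a given $\psi_i$ attains the maximum is in general merely Borel. Reconciling the two requires the inner-regularity approximation of each $E_i$ by a compact $K_i$, together with the fact that finitely many disjoint compacts can be separated by disjoint open neighbourhoods staying inside $A$; the nonnegativity of the $\psi_i$ is what makes the restriction from $U_i$ back to $K_i$ harmless. Everything else — the monotone-convergence reduction and the two final limits — is routine.
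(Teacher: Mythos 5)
Your proof is correct. The paper itself gives no argument for this lemma — it is quoted verbatim from Braides \cite[Prop.~1.16]{Bra1} — and what you have written is precisely the standard proof behind that citation: reduce $\sup_i\psi_i$ to a finite maximum by monotone convergence, partition $A$ into the Borel sets where each $\psi_i$ first attains the maximum, approximate these from inside by disjoint compacts using inner regularity of $\lambda$, separate the compacts by disjoint open neighbourhoods inside $A$, and invoke superadditivity. The extra hypotheses you flag (nonnegativity of the $\psi_i$, $\lambda$ Radon, and the monotone form $\mu(A)\geq\sum_i\mu(U_i)$ of superadditivity) are indeed needed for the argument and are all satisfied in the paper's applications, where $\mu$ is a $\Gamma$-$\liminf$ of nonnegative energies and $\lambda$ is a Radon measure.
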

\noindent
We introduce the superadditive increasing set function $\mu$ defined on $\A(O)$ by 
\[\mu(A):=\Gamma-\liminf_{\eps\rw0} \LF_\eps(u;)),\quad\text{for any }A\in\A(O)\]
and we let $\lambda$ be a Radon measure defined as 
\[\lambda:=[1+\a|u(x^+)-u(x^-)|]\H^{d-1}\restr J_u.\]
Fix a sequence $(\xi_i)_{i\in\N}$ dense in $\S^{d-1}$. By~\eqref{eq:liminfxi} we have 
\[\mu(O)\geq \int_O \psi_i\dif \lambda, \quad i\in \N,\]
where
\[\psi_i(x):=
\begin{dcases}
   |\langle\nu_u(x),\xi_i\rangle|&\text{if } x\in J_u,\\
   0&\text{if } x\in O\setminus J_u.
  \end{dcases}
\]
Hence by Lemma~\ref{lem:braides} we finally obtain
\[\liminf_{\eps\rw0}\LF_\eps(u_\eps,\phi_\eps;O)\geq \int_O \sup_i\psi_i(x)\dif \mu =\int_{J_u\cap O}\lt[1+\a|[u]|\rt]\dH^{d-1}.\]
\end{proof}

\section{Equicoercivity and $\Gamma$-liminf}
\label{sec:Comp}
We first prove the compactness property stated in the introduction. Let us consider a sequence $(\s_\eps,\phi_\eps)\in\M(\Om,\R^2)$ uniformly bounded in energy by $c_0<+\oo$, 
\begin{equation}\label{eq:unifbound}
0\leq\F_\eps(\s_\eps,\phi_\eps)\leq c_0\qquad\mbox{for } \eps\in(0,1].
\end{equation}
\begin{proof}[Proof of Theorem~\ref{teo:sigma_equicoercive}]
First observe that by definition~\eqref{eq:functional} and equation~\eqref{eq:unifbound}, we have $\s_\eps\in V_\eps(\Om)$ and $\phi_\eps\in W_\eps(\Om)$.

Next,  substituting $|\s_\eps|$ for  $|\nb u_\eps|$  in the argument of~\textit{Step 1.} of the proof of Theorem~\ref{teo:localResult}, inequality~\eqref{eq:limitatezzagradu}  reads
\begin{equation*}
 |\s_\eps|(\Om)\leq\sqrt{16\frac{\eps^2}{\eta^2}\:c_0^2 + 8\eps |\Om| c_0} \,\stackrel{\eps\dw0}\longrightarrow\,  \frac{4 c_0}{\a}\,<\,\infty.
\end{equation*}
Thus the total variation of $(\s_\eps)$ is uniformly bounded and there exists $\s\in \M_S(\ov{\Om})$ such that up to extraction $\s_\eps\to\s$ weakly-$*$ in $\M(\ov\Om)$.

Now, considering the last term in the energy~\eqref{eq:functional} we have
\begin{equation*}
\int_\Om(1-\phi_\eps)^2 \dx\leq 2\eps \;\F_\eps(\s_\eps,\phi_\eps)\leq 2\eps\;c_0 \rightarrow 0.
\end{equation*} 
Hence, $\phi_\eps\to1$ in $L^2(\Om)$.\medskip

Let us now study the structure of the limit measure $\s$. Let us recall that   $\hat\Om$ is a bounded convex open set such that $\ov\Om\subset\hat\Om$ and let us extend $\s_\eps$ by 0 and $\phi_\eps$ by 1 in $\hat\Om\setminus\ov\Om$. Obviously we have $\F_\eps(\s_\eps,\phi_\eps;\hat\Om)=\F_\eps(\s_\eps,\phi_\eps;\Om)$, therefore for any $O\in\A_s(\hat\Om)$ applying the localization described in Section~\ref{sec:LocRes} we can associate to each $\s_\eps$ a function $ u_\eps\in H^1(O)$ with mean value $0$ such that $\s_\eps=\nb^\perp u_\eps$ in $O$. By Theorem~\ref{teo:localResult} there exists $u\in\P(O)$ such that up to extraction $u_\eps\rwstar u$. Eventually, by uniqueness of the limit, we get 
 \[\s\restr O=-[u]\nu_{J_u}^\perp\H^1\restr (J_u\cap O).\]
Since we can cover $\ov\Om\setminus S$ by finitely many sets $O\in \A_s(\hat\Om)$, this shows that $\s$ decomposes as 
\begin{equation*}
\s\, =\, U(M_\s,\t_\s,\xi_\s)+\underbrace{\sum_{j=0}^N c_j \delta_{x_j}}_{\mu}. 
\end{equation*}
By Lemma~\ref{lemma:completediv} there exists a rectifiable measure $\g=U(M_\g,\t_\g,\xi_\g)$ such that $\nb\cdot(\s+\g)=0$ and $\H^1(M_\g\cap M_\s )=0$. Then there exists $u\in BV(\Om)$ such that $Du =\s^{\perp}+\g^{\perp}$. From~\eqref{BVproperty}, we deduce $|Du|(S)=0$ which implies $|\mu|(S)=\sum|c_j|=0$. Hence $c_j=0$ for $j=0,\dots,N$ and $\s$ writes in the form $U(M_\s,\t_\s,\xi_\s)$.
\end{proof}

Let us now use the local results of Section~\ref{sec:LocRes}  to prove the lower bound.
\begin{proof}[Proof of Theorem~\ref{teo:sigma_liminf}]
Let $(\s_{\eps},\phi_{\eps})$ as in the statement of the theorem. Without loss of generality, we can suppose that $\F_{\eps}(\s_{\eps},\phi_{\eps})<+\oo$. Theorem~\ref{teo:sigma_equicoercive} then ensures the existence of a rectifiable measure $\s=U(M_\s,\t_\s,\xi_\s)$ with $\supp(\s)\subset\overline\Om$ such that $\s_{\eps}\rwstar\s$. \\ 
Let $\hat\Om$ be as in the previous proof and let us define $\mu=\Gamma-\liminf_{\eps}\F_{\eps}(\s_{\eps},\phi_{\eps})$ and $\lambda= \a|\s|+\H^{1}\restr M_\s$. Consider the countable family of sets $\{O_{i}\}\subset\A_{S}(\hat\Om)$ made of the open rectangles $O_i\subset \hat\Om \setminus S$ with vertices in $\Q^{2}$ and let $\psi_{i}:=1_{O_{i}}$.
The local result stated in Theorem~\ref{teo:localResult} gives for any $i\in\N$
\[\mu(A)\geq\mu(O_{i}\cap A)\geq\lambda(O_{i}\cap A)=\int_{A}\psi_{i}\dif \lambda.\]
Therefore Lemma~\ref{lem:braides} gives
\begin{align*}
\Gamma-\liminf_{\eps\dw0}\F_{\eps}(\s_{\eps},\phi_{\eps})&=\mu(\hat\Om)\geq\lambda(\hat\Om)=\a|\s|(\overline\Om)+\H^{1}(M_\s) 
\end{align*}
since $\sup_{i}\psi_{i}$ is the constant function $1$.
\end{proof}

\section{Upper bound}\label{sec:UppBound}
\subsection{A density result}
In order to obtain the upper bound we first provide a density lemma. We show that measures which have support contained in a finite union of segments, are dense in energy.\\
Without loss of generality let us assume that $\s\in \M_S(\ov\Om)$  is such that $\E_\alpha(\s)<\infty$. In particular $\s=U(M_\s,\t_\s,\xi_\s)$ is a $\H^1$-rectifiable measure. Applying Lemma~\ref{lemma:completediv} we obtain a $\H^1$-rectifiable measure $\g=U(M_\g,\t_\g,\xi_\g)$ and a partition of $\Om$  made of polyhedrons $\{\Om_i\}$  such that $M_\g\subset \cup_i\de\Om_i$, $\H^1(M_\s \cap\cup_i\de\Om_i)=0$ and $\s+\g$ is divergence free. \\
From the above properties, we can write
\[
\s^\perp+\g^\perp=Du
\]
 for some $u\in\P(\Om)$. Our strategy is the following, using existing results~\cite{Bel_Cham_Gold}, we build an  approximating sequence for $u$ on each $\Om_j$ whose gradient is supported on a finite union of segments. We then glue these approximations together to obtain a sequence $(w_j)$ approximating $u$ in $\hat\Om$. The main difficulty is to establish that $Dw_j \restr [\cup_i\de\Om_i]$ is close to $Du\restr  [\cup_i\de\Om_i]=\g^\perp$.

\begin{lemma}[Approximation of $u$]\label{lem:uapproximation}
  There exists a sequence $(w_j)\subset\P(\hat\Om)$ with the following properties:
  \begin{enumerate}[\quad a)]
  \item $w_j\to u$ weakly in $BV(\hat\Om)$,
  \item $\supp w_j\subset \ov\Om$,
  \item $\limsup_{j\rw\oo}\E_\a(w_j,1)\leq\E_\a(u,1)$,
  \item $J_{w_j}$ is contained in a finite union of segments for any $j\in \N$,
  \item $|Dw_j-Du|(\cup \de\Om_i)\to0$.
  \end{enumerate}
\end{lemma}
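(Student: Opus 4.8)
The plan is to localise the construction to the cells $\Om_i$ and then glue. The structural point to exploit is that, since $M_\g\subset\cup_i\de\Om_i$ and $\H^1(M_\s\cap\cup_i\de\Om_i)=0$, inside each polyhedron the jump set of $u$ is $\H^1$-a.e.\ contained in $M_\s$, whereas the whole singular part $\g^\perp=Du\restr(\cup_i\de\Om_i)$ is carried by the faces. Thus $u|_{\Om_i}\in\P(\Om_i)$ has a ``generic'' rectifiable jump set in the interior, and the task splits into (i) replacing this interior jump set by a polygonal one, cell by cell, and (ii) controlling what happens on the faces when the cells are reassembled.

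For (i) I would apply the polyhedral approximation of~\cite{Bel_Cham_Gold} on each $\Om_i$: this yields, for every $i$, a sequence $(u^i_j)_j\subset\P(\Om_i)$ with $J_{u^i_j}$ a finite union of segments, with $u^i_j\to u$ in $L^1(\Om_i)$, and with convergence of the weighted surface energy
\[
\int_{J_{u^i_j}}\lt(1+\a|[u^i_j]|\rt)\dH^1\ \longrightarrow\ \int_{J_u\cap\Om_i}\lt(1+\a|[u]|\rt)\dH^1 .
\]
I then set $w_j:=u^i_j$ on each $\Om_i$ and $w_j:=0$ on $\hat\Om\sm\ov\Om$ (picking the representative of $u$ vanishing outside $\ov\Om$). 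Property~$(d)$ is immediate since both the $J_{u^i_j}$ and the faces $\cup_i\de\Om_i$ are polygonal, and~$(b)$ holds by construction; the cell-wise energy convergence provides a uniform bound on $|Dw_j|(\hat\Om)$, which together with the $L^1$ convergence yields the weak-$*$ convergence~$(a)$. The interior contribution to $\E_\a(w_j,1)$ is controlled by the displayed cell-wise convergence, so the remaining point for~$(c)$ is the contribution carried by the interfaces.

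The heart of the proof, and the step I expect to be the main obstacle, is~$(e)$ together with the interface part of~$(c)$. On a shared face $F=\de\Om_i\cap\de\Om_{i'}$ both $Du\restr F$ and $Dw_j\restr F$ are jump measures determined by the one-sided traces, whence
\[
|Dw_j-Du|(F)\ \leq\ \int_F\lt(\big|u^i_j|_F-u|_F\big|+\big|u^{i'}_j|_F-u|_F\big|\rt)\dH^1 ,
\]
so that~$(e)$ reduces to the $L^1(\de\Om_i,\H^1)$ convergence of the boundary traces of the $u^i_j$ to those of $u$. This is exactly where care is needed: $L^1(\Om_i)$ convergence in the bulk does not control traces, and one must run the approximation of~\cite{Bel_Cham_Gold} so that it is \emph{strict} in $BV(\Om_i)$ (i.e.\ $|Du^i_j|(\Om_i)\to|Du|(\Om_i)$), which forces trace convergence on the Lipschitz boundary $\de\Om_i$. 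Summing over the finitely many faces then gives~$(e)$. The more delicate sub-point is that, to keep~$(c)$ from picking up spurious length on the interfaces, the cell approximations must be coordinated so that the two one-sided traces on each face agree off a set of arbitrarily small $\H^1$-measure; then $J_{w_j}\cap(\cup_i\de\Om_i)$ stays asymptotically concentrated on $M_\g$ and the interface energy converges to $\int_{M_\g}(1+\a\,\t_\g)\dH^1$, completing~$(c)$. Ensuring both this trace matching and the interior energy estimate simultaneously, from a single application of the polyhedral density result, is the technical crux.
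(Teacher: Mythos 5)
Your architecture --- cell-by-cell polyhedral approximation via \cite{Bel_Cham_Gold}, gluing along the faces, and reducing item~$(e)$ to $L^1(\de\Om_i,\H^1)$ convergence of traces --- is the same as the paper's, and you have correctly located the difficulty at the interfaces. But there is a genuine gap. You apply the density result of \cite{Bel_Cham_Gold} directly to $u$ with the surface density $f(t)=1+\a t$; that result requires a subadditive density vanishing at $0$ (with infinite slope there), and $f(0)=1\neq0$. More importantly, even granting an applicable version, an approximation controlled only in $L^1$ and in energy may create jumps of arbitrarily small amplitude, and because of the ``$1+$'' term each such jump costs length. This is precisely why the step you yourself flag as the ``technical crux'' --- forcing the two one-sided traces on a shared face to agree off a set of small $\H^1$-measure --- is left unresolved in your proposal: $L^1$ convergence of traces controls $\int_F|[w_j]-[u]|\dH^1$ but says nothing about $\H^1(\{[w_j]\neq[u]\})$, so the reassembled $w_j$ could carry unbounded spurious length on $\cup_i\de\Om_i$, ruining both $(c)$ and $(e)$.

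The paper closes this gap with a preliminary quantization step that your proposal is missing. One first replaces $f$ by $f_k(t)=\min\{(2^{k/2}+\a 2^{-k/2})\sqrt{t},\,f(t)\}$, which is continuous, subadditive, increasing, with $f_k(0)=0$ and $f_k(t)/t\rw+\oo$ as $t\rw0$ (so the cited lemma applies), and replaces $u$ by $u_k=2^{-k}\lfloor 2^k u\rfloor$, whose values lie in $2^{-k}\Z$; on such functions $\E_{f_k}=\E_f$, and the errors $|Du_k-Du|(\hat\Om)$ and $\E_f(u_k)-\E_f(u)$ are of order $2^{-k}\H^1(J_u)$. The cell approximants $w^i_j$ then also take values in $2^{-k}\Z$, so any trace disagreement across a face has amplitude at least $2^{-k}$, and the $L^1$ trace convergence (which is part of the statement of Lemma~4.1 of \cite{Bel_Cham_Gold}, so there is no need to invoke strict convergence separately) yields $\H^1(\{x\in F:[w_j]\neq[u_k]\})\leq 2^k\int_F|[w_j]-[u_k]|\dH^1\rw0$. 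This single device delivers both $(e)$ and the interface part of $(c)$; a diagonal extraction in $(j,k)$ then concludes. Without some such quantization, the obstacle you identify is not merely delicate but fatal to the argument as you have set it up.
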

\begin{proof}
\textit{Step 1.} 
In order to apply the results of~\cite{Bel_Cham_Gold}, we first need to modify $u$ and the energy.  Let us note the energy density function $f(t)=1+\a t$ and for $k\geq0$ and $t\geq 0$ let us introduce the approximation 
  \begin{equation*}\label{eq:definizionefk}
 f_k(t):=\min\{(2^{k/2}+\a 2^{-k/2})\sqrt{t},f(t)\}.
  \end{equation*}
  \begin{figure}[h!]
    \centering
    \includegraphics[width=.3\textwidth]{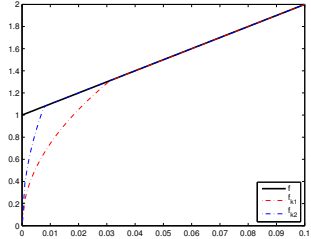} 
\caption{Graph of f and two of its approximations $f_{k_1} $ and $f_{k_2}$ with $k_1<k_2$.}\label{fig:approssimazionef}
  \end{figure}
 We have $0\leq f_k\leq f$ and $f_k\equiv f$ on $[2^{-k},+\infty)$.
Notice that $f_k$ is continuous, sub-additive and increasing on $[0,+\oo)$ and that  $f_k(0)=0$ with $\lim_{t\rw0} \frac{f_k(t)}{t}=+\oo$. 
We define the associated energy for functions $v\in\P(\hat\Om)$ as $\E_{f_k}(v,\hat \Om):=\int_{J_{v}\cap\hat\Om}f_k([v])\dH^1$. \\
Now we note $\P_k(\hat\Om)$ the set of functions $v\in \P(\hat\Om)$ such that $v(\hat\Om)\subset 2^{-k}\Z$. For these functions we have $|v^+(x)-v^-(x)|\geq 2^{-k}$ for $\H^1$-almost every $x\in J_v$. Consequently, there holds
\[
\E_{f_k}(v)\, =\, \E_f(v).
\]
For each fixed $k\geq0 $,  let us introduce the function 
\[
u_k=2^{-k}\lfloor2^k u\rfloor
\]
where $\lfloor t\rfloor$ denotes the integer part of the real $t$. Note that $u_k\in \P_k(\hat\Om)$ with $J_{u_k}\subset J_u$ and $\|u-u_k\|_\infty\leq 2^{-k}$. Notice also since $\lt|(u_k^+-u_k^-)-(u^+-u^-)\rt|\leq 2^{-k}$ we have 
\begin{equation}
\label{eq:diffuuk}
|Du_k-Du|(\hat\Om)\, \leq\, 2^{-k}\H^1(J_u)
\end{equation}
In particular $u_k\to u$ strongly in $BV(\hat\Om)$. Moreover, we see that
\begin{equation}\label{eq:Ef=Efk}
\E_{f_k}(u_k) \, =\, \E_f(u_k) \, \leq\ \, \E_f(u) + \a 2^{-k}\H^1(J_u).
\end{equation}

\medskip\textit{Step 2.} Let us approximate the function $u_k$. Let us fix $k\geq 0$ and $\Om_i$. We can apply Lemma 4.1 of~\cite{Bel_Cham_Gold} to  the function $u_k\restr{\Om_i}$ and to the energy $\E_{f_k}(\cdot,\Om_i)$. We obtain a sequence $(w^i_j)$ which enjoys the following properties:
  \begin{equation*}\label{eq:properties}
  \begin{aligned}
        & w^i_j(\Om_i)\subset u_k(\Om_i)\subset 2^{-k}\Z,\quad \forall j\in\N, \mbox{ hence }w^i_j\in \P_k(\hat\Om), \\
        & w^j_i\to u_k \mbox{ in } L^1(\Om_i)\mbox{ as }j\rw +\oo, \\
        & \lim_{j\rw +\oo} \E_{f_k}(w^i_j,\Om_i)= \lim_{j\rw +\oo} \E_{f}(w^i_j,\Om_i)=\E_{f}(u_k,\Om_i),\\  
        & J_{w^j_i} \mbox{ is contained in a finite union of segments for any } j\in \N,\\
        & \int_{\de \Om_i}|Tw^i_j-Tu_k|\dH^1\rw0\mbox{ where } T:BV(\Om_i)\rw L^1(\de \Om_i) \mbox{ denotes the trace operator}.\\
     \end{aligned}
  \end{equation*}
Let us now define globally 
\[
 w_j:=\sum_j w_j^i 1_{\Om_i}.
\]
From the above properties, we have  $w_j\rwstar u_k$, 
\begin{equation}
\label{eq:convEfw}
\lim\E_{f}(w^i_j,\hat \Om)=\E_{f}(u_k,\Om_i)
\end{equation}
and 
\begin{equation}
\label{eq:diffukwj}
|D w_j - D u_k|(\cup_i\de\Om_i)\, \to\, 0\quad\mbox{as $j\to\infty$}. 
\end{equation}
Eventually, using a diagonal argument, we have proved the existence of a sequence $(w_j)\subset \P(\hat\Om)$ complying to items~\textit{(a)},~~\textit{(b)} and~\textit{(d)} of the lemma. Moreover, item~\textit{(c)} is the consequence of~\eqref{eq:Ef=Efk} and~\eqref{eq:convEfw} and  item~\textit{(e)} follows from~\eqref{eq:diffuuk} and~\eqref{eq:diffukwj}.
\end{proof}

Going back to the $\H^1$-rectifiable measures $\s=U(M_\s,\t_\s,\xi_\s)$, we define the sequence 
\[
 \s_j:=-Dw^\perp_i-\g.
\]
We recall that $\g=U(M_\g,\t_\g,\xi_\g)$ with $M_\g\subset \cup \de \Om_i$. In particular $\g=-Du^\perp\restr(\cup_i \de\Om_i)$. We deduce from the previous lemma:
\begin{lemma}\label{lem:density}
 There exists a sequence $(\s_j)\in\M_S(\ov\Om)$ with the properties:
  \begin{enumerate}[\quad-]
  \item $\s_j\rw \s$ with respect to weak-$*$ convergence of measures,
  \item $\s_j= U(M_{\s_j},\t_{\s_j},\xi_{\s_j})$ with $M_{\s_j}$ contained in a finite union of segments,
  \item $\limsup_{j\rw\oo}\E_\a(\s_j,1)\leq\E_\a(\s,1)$.
    \end{enumerate}
\end{lemma}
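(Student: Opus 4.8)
The plan is to run the three claims for the measures $\s_j:=-Dw_j^\perp-\g$ associated to the functions $w_j$ of Lemma~\ref{lem:uapproximation} (reading the index in the definition as $j$), reducing everything to properties~(a)--(e) there. Set $\Sigma:=\bigcup_i\de\Om_i$, a finite union of segments carrying $\g$. The structural claims are quick: $D(\cdot)^\perp$ is a rotated gradient, hence divergence free, so $\nb\cdot\s_j=-\nb\cdot\g=N\d_{x_0}-\sum_{i=1}^N\d_{x_i}$ by Lemma~\ref{lemma:completediv}(a), and with $\supp w_j\subset\ov\Om$, $M_\g\subset\ov\Om$ this gives $\s_j\in\M_S(\ov\Om)$; since $w_j\in\P(\hat\Om)$ the measure $-Dw_j^\perp$ is rectifiable, carried by $J_{w_j}$ with multiplicity $|[w_j]|$, so adding the rectifiable $\g$ (pointwise sum of vector densities, magnitudes as multiplicities) yields $\s_j=U(M_{\s_j},\t_{\s_j},\xi_{\s_j})$ with $M_{\s_j}\subset J_{w_j}\cup M_\g$ contained in a finite union of segments by item~(d); finally $\s_j\rwstar\s$ follows from $w_j\to u$ weakly in $BV$ (item~(a)) and $\s=-Du^\perp-\g$.

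The substance is the energy bound, for which I would split $\E_\a(\s_j,1)=\H^1(M_{\s_j})+\a\,|\s_j|(\ov\Om)$ over $\Sigma$ and $\hat\Om\setminus\Sigma$. Off $\Sigma$ the term $\g$ is absent, so $\s_j=-Dw_j^\perp$ and the interior contribution equals $\E_f(w_j,\bigcup_i\Om_i)=\int_{J_{w_j}\setminus\Sigma}(1+\a|[w_j]|)\dH^1$, which by the per-cell convergence~\eqref{eq:convEfw} tends to the interior energy of the limit. On $\Sigma$ one computes $\s_j\restr\Sigma=-\big((Dw_j-Du)\restr\Sigma\big)^\perp$; since $u$, and hence $u_k$, has the integer jumps $\t_\g\in\{1,N\}$ on $M_\g$ one has $Du\restr\Sigma=Du_k\restr\Sigma$, so the mass part $\a\,|\s_j\restr\Sigma|(\ov\Om)=\a\,|Dw_j-Du_k|(\Sigma)\to0$ by item~(e).

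The hard part is the length term $\H^1(M_{\s_j}\cap\Sigma)$, which the vanishing of the mass on $\Sigma$ does not by itself control. Here I would exploit that $w_j$ and $u_k$ take values in $2^{-k}\Z$: at $\H^1$-a.e.\ point of $\Sigma$ the density of $\s_j$ is $[w_j]-[u_k]$ up to rotation, a multiple of $2^{-k}$ that is $\geq2^{-k}$ in modulus wherever it is nonzero, so $\t_{\s_j}\geq2^{-k}$ on $M_{\s_j}\cap\Sigma$ and
\[
\H^1(M_{\s_j}\cap\Sigma)\,\leq\,2^{k}\int_{M_{\s_j}\cap\Sigma}\t_{\s_j}\dH^1\,=\,2^{k}\,|Dw_j-Du_k|(\Sigma).
\]
For fixed $k$ the right-hand side tends to $0$ as $j\to\infty$, but the factor $2^k$ prevents taking both limits simultaneously. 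I would therefore work with the double family $\s_j^{(k)}=-D(w_j^{(k)})^\perp-\g$, let $j\to\infty$ at fixed $k$ to get $\limsup_j\E_\a(\s_j^{(k)},1)\leq\E_\a(\s^{(k)},1)$ with $\s^{(k)}=-Du_k^\perp-\g$, note $\E_\a(\s^{(k)},1)\to\E_\a(\s,1)$ as $k\to\infty$ (since $|[u_k]|\to\t_\s$ on $M_\s$), and close by a standard diagonal extraction, the masses being uniformly bounded through the energies. Equivalently, one may simply refine the diagonalization in Lemma~\ref{lem:uapproximation} so that $2^{k(j)}|Dw_j-Du_{k(j)}|(\Sigma)\to0$, in which case the single sequence $\s_j$ works directly.
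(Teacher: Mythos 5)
Your construction $\s_j=-Dw_j^\perp-\g$ and the verification of the structural items via properties \textit{(a)}--\textit{(e)} of Lemma~\ref{lem:uapproximation} is exactly the paper's route (the paper states the lemma as an immediate consequence of Lemma~\ref{lem:uapproximation} and offers no further argument). Where you go beyond the paper is in noticing that item~\textit{(e)} controls only the mass, not the length $\H^1(M_{\s_j}\cap\Sigma)$ with $\Sigma=\bigcup_i\de\Om_i$; your fix --- that $w_j$ and $u_k$ are $2^{-k}\Z$-valued, so $\t_{\s_j}\geq 2^{-k}$ on $M_{\s_j}\cap\Sigma$ and hence $\H^1(M_{\s_j}\cap\Sigma)\leq 2^k|Dw_j-Du_k|(\Sigma)$, followed by the fixed-$k$-then-diagonal limit --- is correct, is compatible with the diagonal extraction already built into the proof of Lemma~\ref{lem:uapproximation}, and fills a genuine gap that the paper leaves implicit.
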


 \subsection{Construction of a recovery sequence}
 Let us prove the $\Gamma$-limsup inequality stated in Theorem~\ref{teo:sigma_limsup}. Recall that the latter consists in finding a sequence $(\s\e,\phi\e)$ for any given couple $(\s,\phi)\in\M(\overline\Om,\R^2)\times L^1(\Om)$ such that $\s\e\rwstar\s$,  $\phi\e\rw\phi$ in $L^1(\Om)$ and 
 \begin{equation}\label{eq:limsup}
\limsup_{\eps\dw0}\F\e(\s\e,\phi\e)\leq\E_\a(\s,\phi).  
 \end{equation}
 When $\E_\a(\s,\phi)=+\infty$ the inequality is valid for any sequence therefore by definition~\eqref{eq:limit} we can assume $\s=U(M,\t,\xi)$ and $\phi=1$. Furthermore by density Lemma~\ref{lem:density} is sufficient to consider measures of the form
 \begin{equation}\label{eq:strutturadense}
\s=\sum_{i=1}^n U(M_i,\t_i,\xi_i), 
 \end{equation}
 where $M_i$ is a segment, $\t_i\in\R_+$ is $\H^1$-a.e. constant and $\xi_i$ is an orientation of $M_i$ for each $i$. Without loss of generality we can suppose that for each couple of segments $M_i$, $M_j$, for $i \neq j$, the intersection $M_i\cap M_j$ is at most a point (called branching point) not belonging to the relative interior of $M_i$ and $M_j$. 
 We firstly produce the estimate~\eqref{eq:limsup} for $\s$ composed by a single segment thus let us assume $\s=\t e_1\cdot \H^1\restr (0,l)\times\{0\}$.  
 
 \medskip\noindent\textit{Notation:}
 Let us fix the values 
\begin{equation*}\label{eq:infinitesimi}
a_\eps:=\begin{dcases}
         \frac{\t\a\,\eps}{2} &\mbox{ if } \a>0\\
         \quad\eps& \mbox{ if } \a=0
        \end{dcases},
\qquad\qquad
b_\eps:=\eps\ln\lt(\frac{1-\eta}{\eps}\rt)
\qquad\mbox{ and }\qquad
r\e=\max\{\eps,a\e\}.
\end{equation*}
Let  $d_\infty(x,S)$ be the distance function from $x$ to the set $S\subset \Om$ relative to the infinity norm on $\R^2$ and $Q_r(P)=\{x\in\R^2: d_\infty(x,P)\leq r\}$ the square centered in $P$ of size $2r$ and sides parallel to the axes. Introduce the sets
\begin{align*}\label{def:rectangular}
I_{a\e}&:=\{x\in\R^2: d_\infty(x,[0,l]\times\{0\})\leq a\e\}\cup Q_{r\e}(0,0)\cup Q_{r\e}(l,0),\\
I_{b\e}&:=\{x\in\R^2: d_\infty(x,I_{a\e})\leq b\e\},\\
I_{c\e}&:=\{x\in\R^2: d_\infty(x,(I_{a\e}\cup I_{b\e}))\leq \eps\},\\
I_{d\e}&:=\Om\setminus(I_{a\e}\cup I_{b\e}\cup I_{c\e}),
\end{align*}
and define $R\e= I_{a\e}\setminus(Q_{r\e}(0,0)\cup Q_{r\e}(l,0))$. 
\begin{figure}[h!]
\centering
  \includegraphics[width=\textwidth]{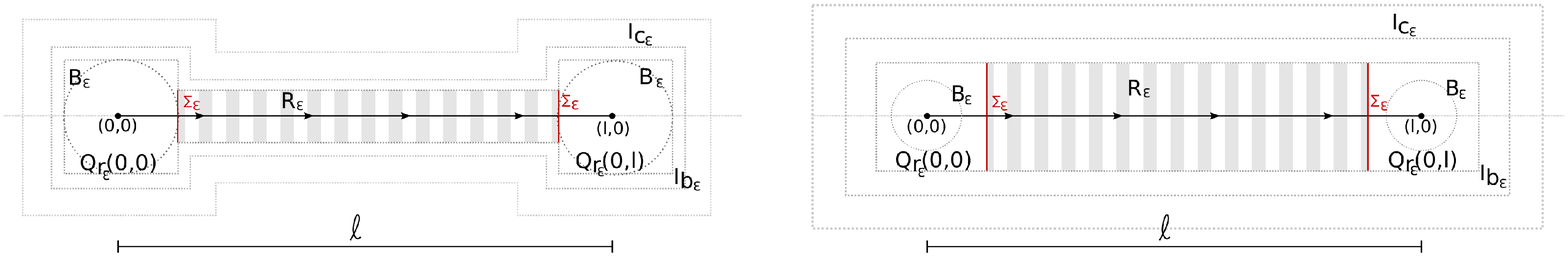} 
  \caption{Example of the neighborhoods of the segment $[0,l]\times\{0\}$. On the left the case $r\e=\eps$ on the right the case in which $r\e=a\e>\eps$. The stripped region is $R\e$ and $I_{a\e}=R\e\cup(Q_{r\e}(0,0)\cup Q_{r\e}(l,0))$. Remark that $\supp(\rho\e)=B(0,\eps)$.}\label{fig:intorninuovo}
\end{figure}

\medskip\noindent\textit{Costruction of $\s\e$:} We build $\s\e$ as a vector field supported on $I_{a\e}$. In particular we add together three different constructions performed respectively on $R\e$, $Q_{r\e}(0,0)$ and $ Q_{r\e}(l,0)$. 
Let $r=r_\eps/\eps$ and consider the problem 

\begin{minipage}[b]{0.6\textwidth}
\begin{equation*}
\begin{dcases}
\Delta u=\pm\t\delta_{x_0}*\rho &\mbox{ on }  Q_{r}(0,0),\\
\frac{\de u}{\de \nu}=\frac{\pm\t}{\H^1(\Sigma)}  &\mbox{ on }  \Sigma^\pm=\{x\in\R^2: x_1=\pm1,\;|x_2|\leq\frac{\t\a}{2}\}.
\end{dcases}
\end{equation*}
\end{minipage}
\hfill
\begin{minipage}{0.3\textwidth}
    \begin{center}
    \includegraphics[width=.6\textwidth]{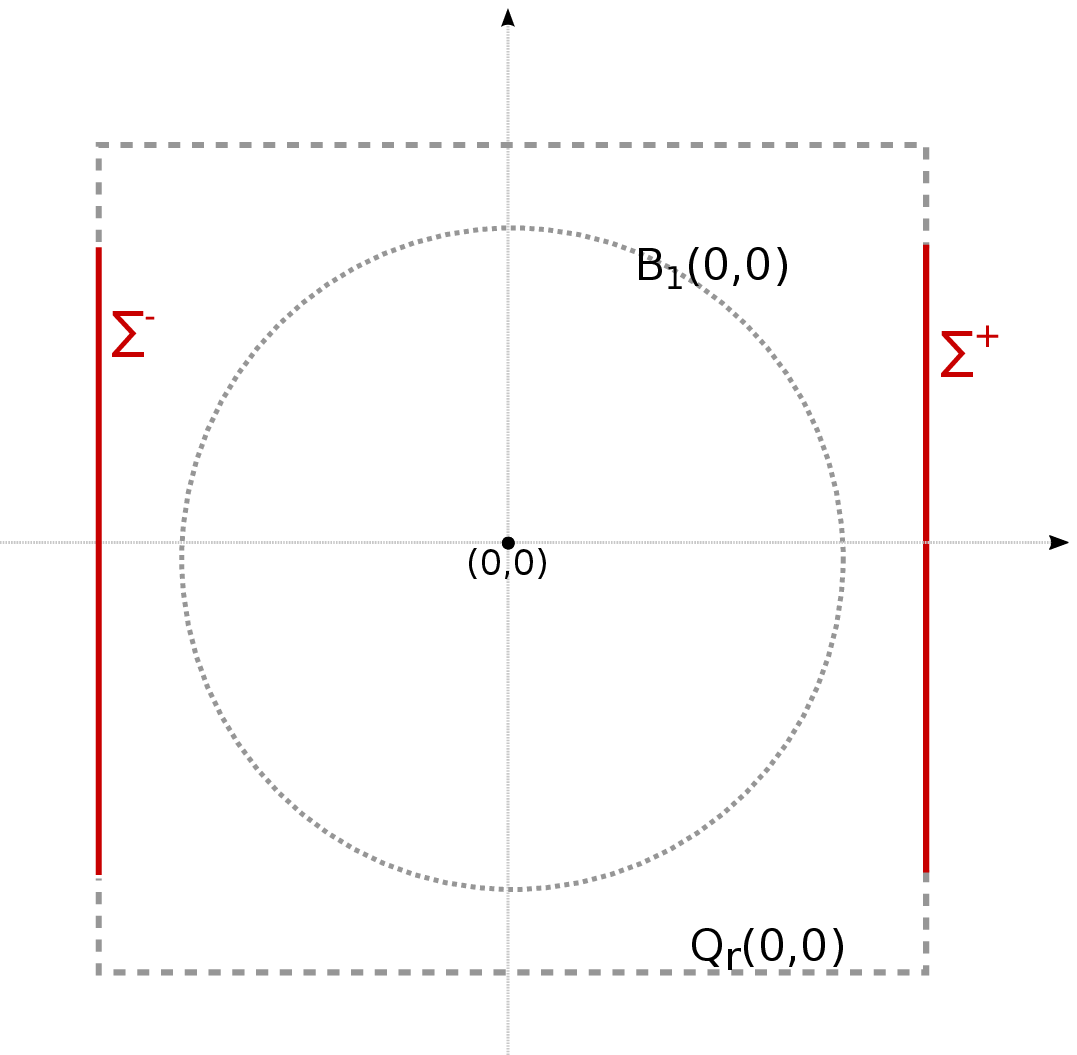} 

    \end{center}
\end{minipage}

\medskip\noindent Let $u^+$ be the solution relative to the problem in which every occurrence of $\pm$ is replaced by $+$ and let $u^-$ defined accordingly. Then set 
\begin{equation}\label{def:se}
 \s\e=
 \begin{dcases}
       \frac{\nb u^+(x/\eps)}{\eps} &\mbox{ on } Q_{r\e}(0,0),\\
       \quad\frac{\t}{2a_\eps}\cdot e_1 &\mbox{ on } R\e,\\
       \frac{\nb u^-((x-(l,0))/\eps)}{\eps} &\mbox{ on } Q_{r\e}(l,0).
  \end{dcases}
\end{equation}
By construction we have that $\nb\cdot\s\e=\t(\d_{(0,0)}-\d_{(l,0)})\ast\rho\e$ and $\s\e\rwstar\s$.
Let us point out as well that there exists a constant $c(\a,\t)$ such that
\begin{equation}\label{eq:energiapunti}
c(\a,\t):=\int_{Q_{r\e}(l,0)}|\s\e|^2\dx=\int_{Q_{r\e}(0,0)}|\s\e|^2\dx=\int_{Q_{r}(0,0)}\lt|\nb u^+(x)\rt|^2\dx=\int_{Q_{r}(0,0)}\lt|\nb u^-(x)\rt|^2\dx.
\end{equation}

\medskip\noindent\textit{Costruction of $\phi\e$:}
Most of the properties of $\phi\e$ are a consequence of the inequalities obtained in Theorem~\ref{teo:localResult} and the structure of $\s\e$. On one hand we need $\phi\e$ to attain the lowest value possible on $I_{a\e}$ in order to compensate the concentration of $\s\e$ in this set, on the other, as shown in inequality \eqref{dis1:liminf}, we need to provide the optimal profile for the transition from this low value to $1$. For this reasons we are led to consider the following ordinary differential equation associated with the optimal transition
\begin{equation}\label{eq:ode}
\begin{dcases}
w'_\eps = \frac{1}{\eps}(1-w_\eps), \\
w_\eps(0)= \eta. \\
\end{dcases}
\end{equation}
Observe that $w_\eps=1-(1-\eta)\exp\lt(\frac{-t}{\eps}\rt)$ is the explicit solution of equation~\eqref{eq:ode} and set 
\begin{equation}\label{def:phie}
  \phi_\eps(x):=\begin{dcases}
  		\eta&\mbox{ if } x\in I_{a_\eps},\\
		   w_\eps(d_\infty(x, I_{a\e})) &\mbox{ if } x\in I_{b_\eps},\\
		 d_\infty(x, I_{b\e})-\eps+1&\mbox{ if } x\in I_{c_\eps},\\
       		1&\mbox{ otherwise.}
                \end{dcases}
\end{equation}
 The choice of the behavior in the region $I_{c\e}$ is given by the fact that following the optimal profile we will reach the value $1$ only at $+\infty$ thus a linear correction on a small set ensures that this value is achieved with a small cost in energy. 

\medskip\noindent\textit{Evaluation of $\F\e(\s\e,\phi\e)$:}
We prove inequality~\eqref{eq:limsup} for the sequence we have produced. Since the sets $I_{a\e}$, $I_{b\e}$, $ I_{c\e}$ and $I_{d\e}$ are disjoint we can split the energy as follows 
\begin{equation}\label{dislimsup0}
\F\e(\s_\eps,\phi\e)=\F\e(\s_\eps,\phi\e;I_{a\e})+\F\e(\s_\eps,\phi\e;I_{b\e})+\F\e(\s_\eps,\phi\e;I_{c\e})+\F\e(\s_\eps,\phi\e;I_{d\e})
\end{equation}
and evaluate each component individually. Since $\s\e$ is null and $\phi\e$ is constant and equal to $1$ in $I_{d\e}$ we have that $\F\e(\s,\phi\e;I_{d\e})=0$. For the other components we strongly use the definitions in~\eqref{def:se} and~\eqref{def:phie}. Firstly we split again the energy on the set $I_{a\e}$ as following   
\begin{equation*}
 \F\e(\s\e,\phi\e;I_{a\e})=\F\e(\s\e,\phi\e;R\e)+\F\e(\s\e,\phi\e;Q_{r\e}(0,0))+\F\e(\s\e,\phi\e;Q_{r\e}(l,0)). 
\end{equation*}
Now identity~\eqref{eq:energiapunti} leads to the estimate
\begin{equation*}
 \F\e(\s\e,\phi\e;Q_{r\e}(0,0))=\F\e(\s\e,\phi\e;Q_{r\e}(l,0))=\frac{\eta^2}{2\eps}c(\a,\t)+\frac{(1-\eta)^2}{2\eps}\;r\e^2
\end{equation*}
and 
\begin{equation*}
 \F\e(\s\e,\phi\e;R\e)=\lt[\frac{1}{2\eps}\eta^2 \lt|\frac{\t}{2a\e}\rt|^2+\frac{(1-\eta)^2}{2\eps}\rt]|R\e|\leq \lt[\frac{(\t\eta)^2}{8\eps a\e^2}+\frac{1}{2\eps}\rt]2a\e l.
\end{equation*}
Then passing to the limsup we obtain
\begin{equation}\label{dislimsup1}
 \limsup_{\eps\dw0}\F\e(\s\e,\phi\e;I_{a\e})\leq  \t\a l= \t_{}\a\H^1([0,l]\times\{0\}).
 \end{equation}
To obtain the inequality on the sets $I_{b\e}$ and $I_{c\e}$  we are going to apply the Coarea formula therefore let us observe that for both $d_\oo(x, I_{a\e})$ and $d_\oo(x, I_{b\e})$ there holds $|\nb d_\oo(x,\cdot)|=1$ for a.e. $x\in\Om$ and that there exist a constant $k=k(\a,\t)$ such that the level lines $\{d_\oo(x,\cdot)=t\}$ have $\H^1$ length controlled by $2l+kt$. In force of these remarks we obtain
\begin{align}\label{dislimsup2}
 \F\e(\s\e,\phi\e;I_{b\e})&=\int_{I_{b\e}} \lt[\frac{\eps}{2}|\nb \phi_\eps|^2  + \frac{(1-\phi_\eps)^2}{2\eps} \rt]|\nb d_\oo(x,I_{a\e})|\dx\nonumber\\
 &=\int_{0}^{b_\eps}\lt[\frac{(1-w_\eps(t))^2}{2\eps}+\frac{\eps}{2}|w'_\eps(t)|^2 \rt] \H^1(\{d_\infty(\cdot, I_{a\e})=t\})\;\dt\nonumber\\
 &\leq(2l+k\eps)\lt[\frac{1}{2}(1-w_\eps(t))^2\rt]_{0}^{b_\eps}\nonumber\\
 &=\lt(l-\frac{k\eps}{2}\rt)\lt[(1-\eta)^2-\eps^2\rt]\xrightarrow [\eps\dw0]{} l= \H^1([0,l]\times\{0\})
\end{align}
and 
\begin{align}\label{dislimsup3}
 \F\e(\s\e,\phi\e;I_{c\e})&=\int_{I_{c\e}} \lt[\frac{\eps}{2}|\nb \phi_\eps|^2  + \frac{(1-\phi_\eps)^2}{2\eps} \rt]|\nb d_\oo(x,I_{b\e})|\dx\nonumber\\
 &=\int_{0}^{\eps}\lt[\frac{(1-t+\eps-1)^2}{2\eps}+\frac{\eps}{2} \rt] \H^1(\{d_\infty(\cdot, I_{b\e}\cup I_{a\e})=t\})\;\dt\nonumber\\
 &\leq(2l+k\eps)\lt[\frac{(t-\eps)^3}{6\eps}+\frac{\eps}{2} t\rt]_{0}^{\eps}\nonumber\\
 &=(2l+k\eps)\,\eps^2\,\frac{2}{3}\xrightarrow [\eps\dw0]{} 0.
\end{align}
Finally adding up equations~\eqref{dislimsup0},~\eqref{dislimsup1},~\eqref{dislimsup2} and~\eqref{dislimsup3} we obtain
\begin{equation*}
\limsup_{\eps\dw0}\F\e(\s_\eps,\phi\e)\leq (1+\a\;\t)\; \H^1([0,l]\times\{0\}).
\end{equation*}

\medskip\noindent\textit{Case $\s$ of the form~\eqref{eq:strutturadense}:}

Let us call $\s\e^i$, $\phi\e^i$ the functions obtained above for each $\s_i=\t_i\xi_i \H^1\restr M_i$ and set
\begin{equation*}
\s\e=\sum_{i=1}^n \s\e^i,\qquad\qquad \phi\e=\min_{i}\;\phi\e^i.
\end{equation*}
Let us remark that in force of the local construction we have made at the ending points of each segment and since $\s$ satisfies equation~\eqref{eq:divconstraint1} for each $\eps$ there holds
\[\nb\cdot\s_\eps\, =\, \lt(N\delta_{x_0} - \sum_{j=1}^N \delta_{x_j} \rt)\ast \rho_\eps.\]
We now prove inequality~\eqref{eq:limsup}. The following inequality holds true
\begin{align}
\F\e(\s\e,\phi\e)&=\int_\Om \frac{1}{2\eps}|\min_i \phi\e^i|^2|\sum_{i=1}^n \s^i\e|^2 +\frac{\eps}{2}|\nb (\min_i \phi\e^i)|^2 + \frac{(1-\min_i \phi\e^i)^2}{2\eps}\dx\nonumber\\
&\leq\int_\Om \frac{1}{2\eps}|\min_i \phi\e^i|^2|\sum_{i=1}^n \s^i\e|^2 \dx+\sum_{i=1}^n\int_\Om\frac{\eps}{2}|\nb \phi\e^i|^2 + \frac{(1-\phi\e^i)^2}{2\eps}\dx,\label{dis:decomposizione}
\end{align}
therefore we look into an estimation of the first integral in the latter. Observe that for $\eps$ sufficiently small we can assume that all the $R\e^i$ are pairwise disjoint thus we study the behavior in the squares. Let  $M_{i_1},\dots,M_{i_{m_P}}$ be the segments meeting at a branching point $P$. For $j=i_1,\dots,i_{m_P}$ let us call $Q_{r\e^j}(P)$ the squared neighborhood of $P$ relative to the segment $M_j$ as constructed previously. Let us recall that by definition $\phi\e$ is constant and equal to $\eta$ on $\cup_{j=i_1}^{m_P} Q_{r\e^j}(P)$ then we have the estimation
\begin{align}
\int\limits_{\cup_{j=i_1}^{m_P} (R\e^j\cup Q_{r\e^j}(P))}\frac{\phi^2\e}{2\eps}\,|\s\e|^2\dx&=\sum_{j=i_1}^{m_P}\int_{R\e^j}\frac{\phi^2\e}{2\eps}\,|\s\e|^2\dx+\int_{\cup_{i=i_1}^{m_P} Q_{r\e^j}(P)}\frac{\phi^2\e}{2\eps}\,|\sum_{j=i_1}^{m_P}\s^j\e|^2\dx\nonumber\\
&\leq \sum_{j=i_1}^{m_P}\int_{R\e^j}\frac{\phi^2\e}{2\eps}\,|\s\e|^2\dx+m_P\,\frac{\eta^2}{2\eps}\sum_{j=i_1}^{m_P}\int_{Q_{r\e^j}(P)}|\s^j\e|^2\dx\nonumber\\
&\leq \sum_{j=i_1}^{m_P}\int\limits_{(R\e^j\cup Q_{r\e^j}(P))}\frac{1}{2\eps}|\phi\e^j|^2|\s^j\e|^2\dx+\underbrace{(m_P-1)\,\lt(\sum_{j=i_1}^{i_{m_P}}{c(\a,\t_j)}\rt)\frac{\eta^2}{2\eps}}_{c(m_P,\a,\t_{i_1},\dots,\t_{i_{m_P}})\eps}.\label{dis:punti}
\end{align}
Applying inequality~\eqref{dis:punti} on each branching point in equation~\eqref{dis:decomposizione} and recomposing the integral gives
\begin{align*}
\limsup_{\eps\dw0}\F\e(\s\e,\phi\e)&\leq\limsup_{\eps\dw0}\sum_{i=1}^n\F\e(\s^i\e,\phi^i\e) +n\; c(n,\a,\t_i,\dots,\t_n)\eps\\
&\leq \sum_{i=1}^n(1+\a\;\t_i)\; \H^1(M_i)\\
&=\int_{\supp(\s)}(1+\a\,\t)\dH^1 =\E_\a(\s,1)
\end{align*}
which ends the proof. 
\section{Numerical Approximation}\label{sec:NumApprox} 
\subsection{Equations}
In this section we present some numerical simulations of the $\Gamma$-convergence result we have shown. The first issue we address is how to impose the divergence constraint. To this aim is convenient to introduce the following notation 
\begin{align*}
 f\e&= \lt(N\delta_{x_0} - \sum_{j=1}^N \delta_{x_j} \rt)\ast \rho_\eps,\\
 G\e(\s,\phi)&=\begin{dcases}
                   \int_{\Om} \lt[\frac{1}{2\eps}|\phi|^2 |\s|^2 \rt]\dx& \text{if } \s\in V\e,\\
                   +\oo&\mbox{otherwise in } L^2(\Om,\R^2),
                 \end{dcases}\\
 \Lambda\e(\phi)&=\begin{dcases}
		\int_{\Om}\lt[ \frac{\eps}{2}|\nb \phi|^2 +\frac{(1-\phi^2)}{2\eps} \rt]\dx& \text{if } \phi\in W\e,\\
	      +\oo&\mbox{otherwise in } L^1(\Om).
		\end{dcases}
\end{align*}
Then let us observe that the following equality holds 
\[
\min_{\s\in L^2(\Om,\R^2)} G\e(\s,\phi)= \inf_{\s\in L^2(\Om,\R^2)}\lt\{ \sup_{u\in H^1(\Om)}\int_{\Om} \frac{1}{2\eps}|\phi|^2 |\s|^2 +u(\nb\cdot\s-f\e)\dx\rt\}.
\]
By Von Neumann's min-max Theorem~\cite[Thm.~9.7.1]{Att_Hed_Butt} we can exchange inf and sup obtaining for each $\eps>0$ and $\phi \in W\e$ 
\begin{align*}
\min_{\s} G\e(\s,\phi)&=\sup_{u}\inf_{\s}\int_{\Om} \frac{1}{2\eps}|\phi|^2 |\s|^2 -(\nb u\s+u f\e)\dx\\
&=-\min_{u}\int_{\Om} \frac{\eps|\nb u|^2}{2|\phi|^2}  +uf\e\dx=-\min_u G'\e(u,\phi).
\end{align*}
With the relation $\s=\frac{\eps \nb u}{\phi^2}$. This naturally leads to the following alternate minimization problem: given an initial guess $\phi_0$ we define
\begin{align*}
\s_j&:=\frac{\eps \nb u_j}{\phi_j^2} \quad\mbox{ where } \quad u_j:=\argmin G'\e(u,\phi_j), \\
\phi_{j+1}&:=\argmin G\e(\s_j,\phi)+\Lambda\e(\phi).
\end{align*}
We supplement the alternate minimization with a third step where we optimize the component $\Lambda\e$ with respect to a deformation of the domain. Let us describe this step. For $T:\Om \rw\Om$ a smooth map we define
\begin{equation}
\phi_T=\phi\circ T(x) \quad\mbox{and}\quad\Lambda\e(T)=\Lambda\e(\phi_T).
\end{equation}
By a change of variables we get
\[\Lambda\e(T)=\int_\Om\lt[\frac{\eps}{2}|(\nb T\circ T^{-1})\nb \phi|^2 +\frac{(1-\phi^2)}{2\eps}\rt]\det(\nb T^{-1})\dy\]
In particular we choose $T$ to be of the form $x+V(x)$ and evaluate the gradient obtaining 
\[\langle d\Lambda\e(T),W\rangle=\int_{\Om}\lt[\eps(\nb\phi_T;\nb W\nb\phi_T)-\frac{\eps}{2}|\nb\phi_T|^2\nb\cdot W-\frac{1}{2\eps}(1-\phi_T)^2\nb\cdot W\rt]\dx\]
Representing in $H^1(\Om,\Om)$ the gradient of the functional $\Lambda\e$ evaluated for $T(x)=x$ obtains the elliptic problem
\begin{equation*}
 \int_\Om\lt(\nb V,\nb W\rt)\dx+\int_{\Om}\lt[\eps(\nb\phi;\nb W\nb\phi)-\frac{\eps}{2}|\nb\phi|^2\nb\cdot W-\frac{1}{2\eps}(1-\phi)^2\nb\cdot W\rt]\dx=0
\end{equation*}
This method enhances the length minimization process since, as we already pointed out, $\Lambda\e$ is a variation of Modica-Mortola's functional.  
\subsection{Discretization}
We define a circular domain $\Om$ containing the points in $S$ endowed with a uniform mesh and four values $\a$, $\eps_{in}, \eps_{end}$ and $N_{iter}$ and a gaussian convolution kernel $\rho_{\eps_{end}}$ in order to define $f\e$. 
For the discrete spaces we have chosen for $u$, $\phi$ and the vector field $V$ to be piecewise polynomials of order $1$. This leads to the following algorithm

 \begin{algorithm}
 \caption{$\Gamma$-convergence} \label{alg:1}
 \begin{algorithmic}
 \Require 	
 	  $S=\{x_0,\ldots,x_N\}$, \quad$\eps_{in},\quad\eps_{end}$,\quad	 $N_{iter}$,\quad $\a$, index.
 \Function{Steiner}{$x_0,\ldots,x_N,\eps_{in},\eps_{end},N_{iter},\a,\rho$}
    \State  Set $f\e=(N\d_{x_0}-\sum_{i=1}^N \d_{x_i})*\rho_{\eps_{end}}$ and $\phi_0=1$
    \For{$j=1,\ldots,N_{iter}$}
    \State $\eps_j= \lt(\frac{j-N_{iter}}{N_{iter}}\rt)\eps_{in}-\lt(\frac{j}{N_{iter}}\rt)\eps_{end}$
    \State $\tilde\phi\leftarrow L^1$-projection of $\phi_{j-1}^2$
    \State Set $u_j$ as the minimizer of  $G'_{\eps_j}(\cdot,\phi_{j-1})$ 
    \State Set $\s_j=\frac{\eps_j \nb u_j}{\tilde\phi_{j-1}}$
    \State Set $\phi_j$ as the minimizer of $G_{\eps_j}(\s_j,\cdot)+\Lambda\e(\cdot)$
       \If {$j\%10==0 \;\& \;j \geq \; \mbox{index}$}
 \State Solve $\langle d\Lambda_{\eps_j}(T),W\rangle=0$
	\State Set $\phi_j=\phi_j(x+T)$
\EndIf
    \State Set $\phi_j=\max\{\eta,\phi_j\}$
\EndFor
 \EndFunction
 \State \Return  $\phi_{N_{iter}}$, $\s_{N_{iter}}$.
 \end{algorithmic}
 \end{algorithm}
We have implemented the algorithm in FREEFEM++. In the next figures we show the graphs obtained for the couple $(\s_{N_{iter}},\phi_{N_{iter}})$ via the approximation algorithm with the choices  $\a=0.05$, $\eps_{in}=0.5$, $\eps_{end}=0.05$, $\a=0.05$, $N_{iter} = 500$ and $index = 300$. 
We have chosen to make simulations for points located on the vertices of regular polygons of respectively 3, 4, 5 and 6 vertices. This choice allows a direct visual perception of the results.
\begin{figure}[h!]
\centering
  \includegraphics[width=3cm]{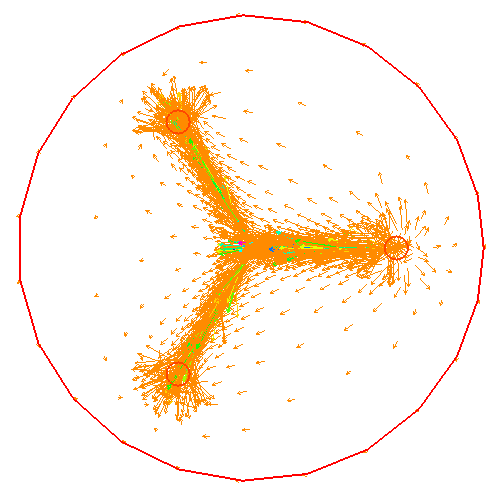}\quad \includegraphics[width=3cm]{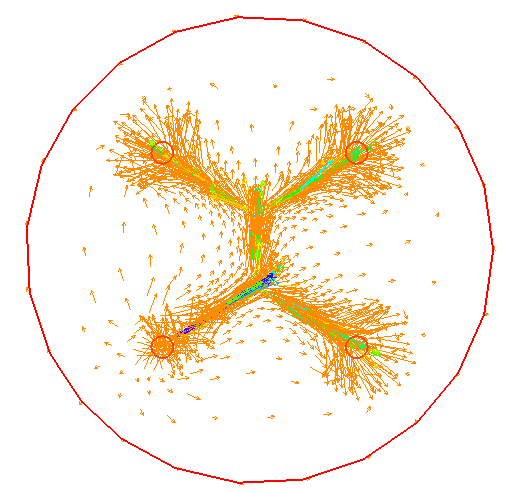}\quad\includegraphics[width=3cm]{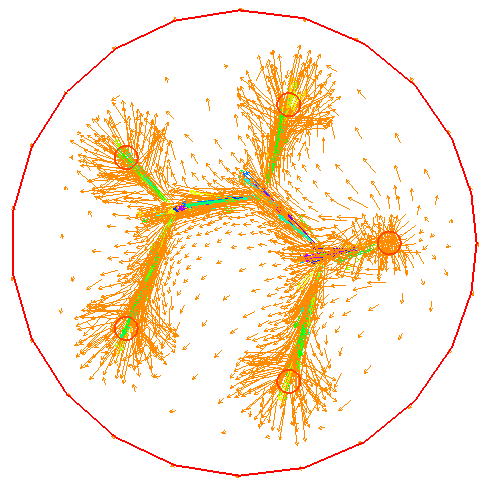}\quad \includegraphics[width=3cm]{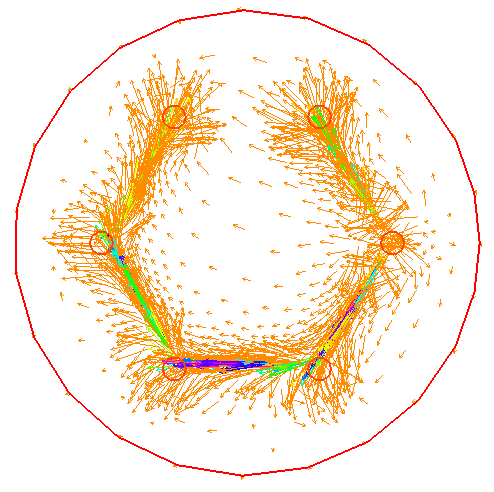}  \includegraphics[width=3cm]{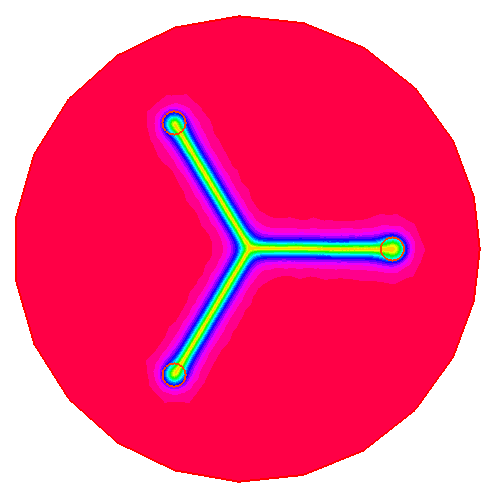}\quad \includegraphics[width=3cm]{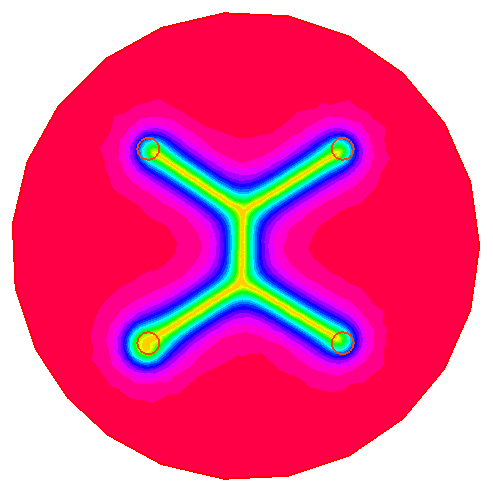}\quad\includegraphics[width=3cm]{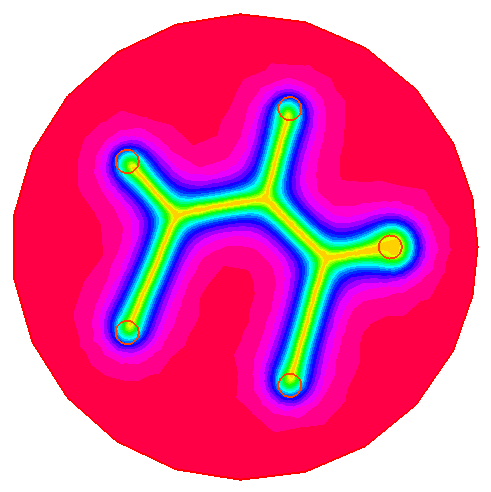}\quad \includegraphics[width=3cm]{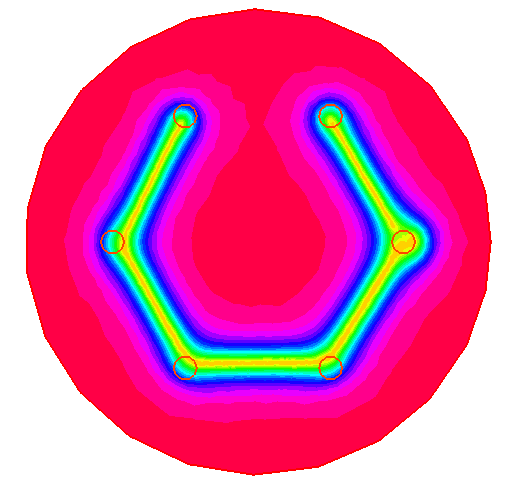}
  \caption{Graph of the couple $(\s_{N_{iter}},\phi_{N_{iter}})$ obtained via Algorithm~\ref{alg:1} in the case of 3, 4, 5 and 6 points located on the vertices of a regular polygon.}\label{fig:simulazione1}
\end{figure}
\begin{figure}[h!]
\centering
\includegraphics[clip,trim=0 2.5cm 0 2.5cm ,height=3.8cm]{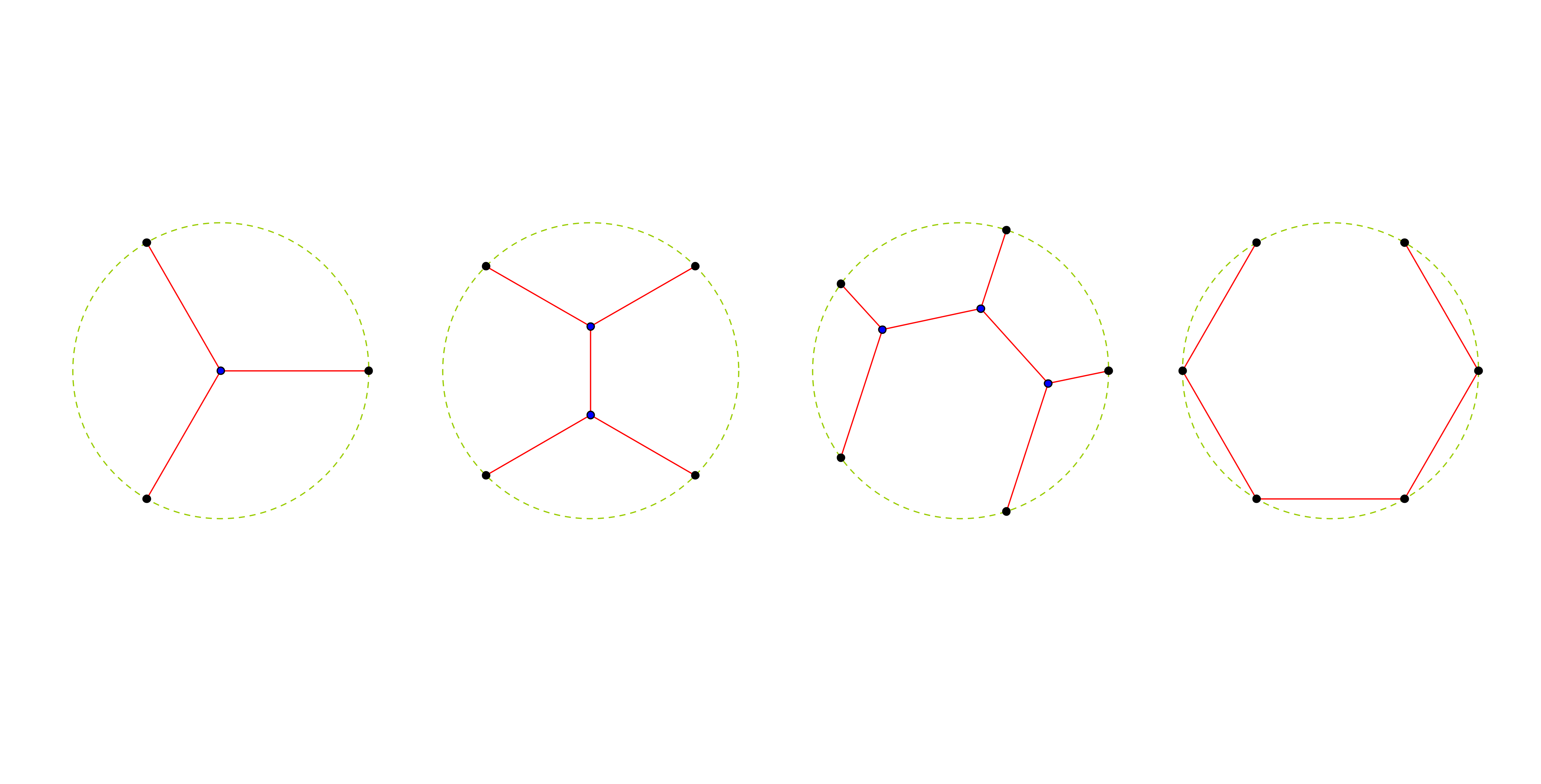}
  \caption{Graph of the exact solutions to the Steiner Problem constrained as the in the previous figure.}\label{fig:exact solution}
\end{figure}
Finally let us point out the need of the third minimization step. In the following figure we have the graph of the solution obtained for a simulation in which the third step is omitted. Even from visual perception is possible to recognize that the solution differs both from the solution of the Steiner Tree and the minimizer of the $\E_\a$ energy as evident from the figure. Furthermore we do not obtain the classical straight segments we would expect in studying geodesic in the euclidean metric. We suppose that these alterations are a consequence of the alternate minimization method that could not lead to a global minimum and therefore we introduced the third step in the algorithm in order to perturbate local solutions. 
 \begin{figure}[h!]
 \centering
 \includegraphics[width=3.5cm]{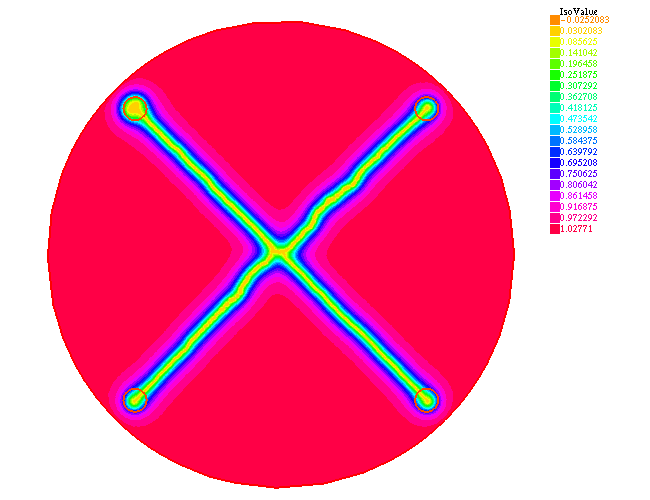}\qquad\qquad\qquad \includegraphics[width=3cm]{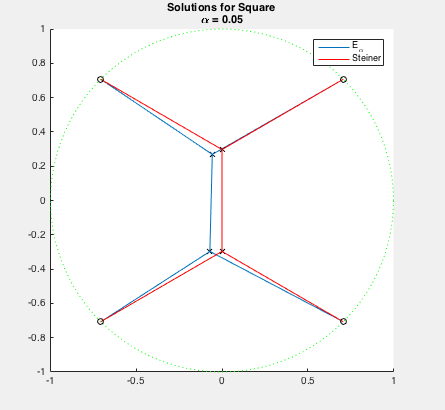}
  \caption{On the left: Graph of $\phi$ obtained via Algorithm~\ref{alg:1} in which the gradient descend method is omitted. On the right:  in red, one of the solutions to the Steiner problem for four points on the vertices of a square, while in blue, a minimizer of the energy $\E_\a$ associated to the same constraint.}\label{fig:4sbagliato}
 \end{figure}
 \newpage
\noindent To ensure that this step is reasonable we have studied several experiments and plotted the numerical energy of each experiment and observed that we are always led to a lower energy. The following plot shows the behavior of the energy for the iterations concerning the third step for the first two solutions in figure~\ref{fig:simulazione1}. Is possible to observe that although there are increments 
\begin{figure}[h!]
	\centering
	\includegraphics[width=.6\textwidth]{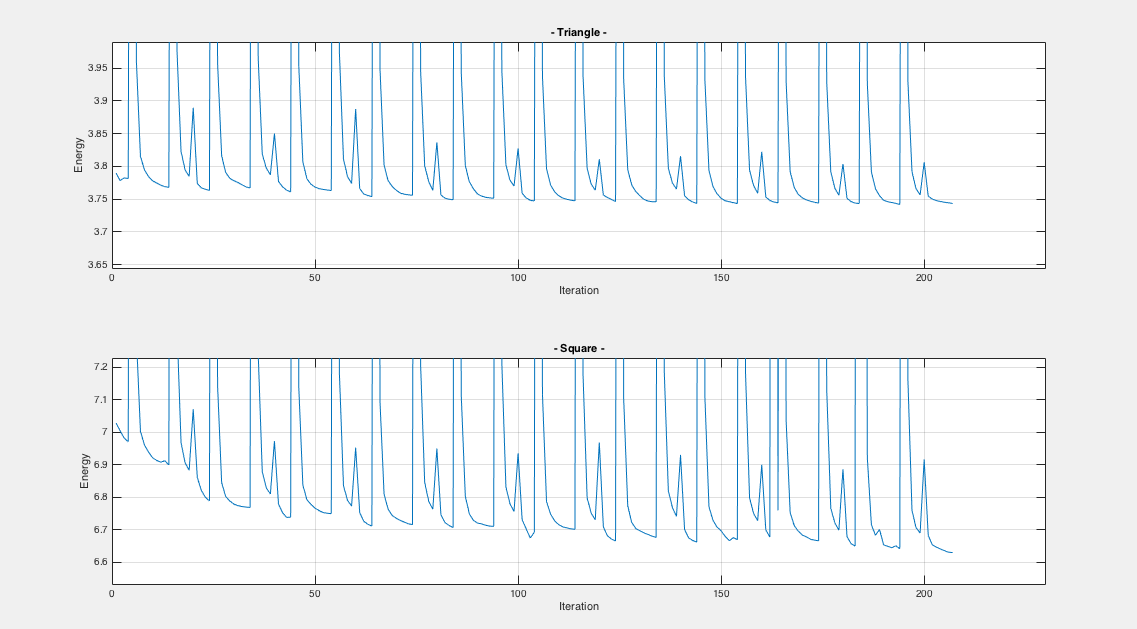}
	\caption{Behavior of the estimated energy of the last 200 iterations of Algorithm~\ref{alg:1} referring to the first two figures in figure~\ref{fig:simulazione1}
	.}\label{fig:grafici_energia}
\end{figure}

\section*{Acknowledgments}
The authors have been supported by the ANR project Geometrya, Grant No. ANR-12-BS01-0014-01. A.C.~also acknowledges the hospitality of Churchill College and DAMTP, U.~Cambridge, with a support of the French Embassy in the UK, and a support of the
Cantab Capital Institute for Mathematics of Information.

\bibliographystyle{plain}
\bibliography{bib}

 \end{document}